\numberwithin{equation}{section}
\numberwithin{figure}{section}
\theoremstyle{plain}
\newtheorem{thm}{\protect\theoremname}
  \theoremstyle{plain}
  \newtheorem{lem}{\protect\lemmaname}
\DeclareMathOperator{\supp}{supp}
  \providecommand{\lemmaname}{Lemma}
\providecommand{\theoremname}{Theorem}
\begin{document}

\title{ A lower bound for $A_{p}$ exponents for some weighted weak-type inequalities}

\author[Carlos P\'erez, Israel P. Rivera-R\'ios]{Carlos P\'erez, Israel P. Rivera-R\'ios}
\address[C. P\'erez]{Departamento de Matem\'aticas, Universidad del Pa\'is Vasco UPV/EHU,
IKERBASQUE, Basque Foundation for Science, and BCAM,
Basque Center for
Applied Mathematics, Bilbao, Spain.}
\email{carlos.perezmo@ehu.es}
\address[I. P. Rivera-R\'ios]{Departamento de Matem\'aticas, Universidad del Pa\'{\i}s Vasco UPV/EHU and BCAM,}
\email{petnapet@gmail.com}

\thanks{Both authors are supported by the Basque Government through the BERC 2014-2017 program and by Spanish Ministry of Economy and Competitiveness MINECO through BCAM Severo Ochoa excellence accreditation SEV-2013-0323. and through the project MTM2014-53850-P. I.P.R-R. is also supported by Spanish Ministry of Economy and Competitiveness MINECO through the project MTM2012-30748.}

\begin{abstract}
 We give a weak-type counterpart of the main result in \cite{LPR} which allows to provide a lower bound for the exponent of the $A_{p}$ constant in terms of the behaviour of the unweighted inequalities
when $p\rightarrow\infty$ and when $p\rightarrow1^{+}$.  We also provide some applications to classical operators.

\end{abstract}

\maketitle

\section{Introduction and main results}

The purpose of this paper is to give a weak-type counterpart of the main result in \cite{LPR}.   If $T$ is an operator which satisfies a weak type bound like 
\begin{equation}\label{model-quantitative}
\|T\|_{ L^{p}(w)  \to L^{p,\infty}(w)   }\le c\, [w]^{\beta}_{A_p} \qquad w \in A_{ p},
\end{equation}
with $\beta>0,$ then we will show in Theorem \ref{Thm:LowerBoundWE}  that the  optimal lower bound  for $\beta$ is related to the  asymptotic behaviour of the unweighted $L^p$ norm $\|T\|_{L^{p}(\mathbb{R}^n)  \rightarrow L^{p,\infty}(\mathbb{R}^n) }$ as $p$ goes to $1$ and $+\infty$. We recall that a weight $w$, namely a non-negative locally integrable
function, belongs to the $A_{p}$ class of Muckenhoupt if 
\[
\begin{split}[w]_{A_{p}} & =\sup_{Q}\left(\frac{1}{|Q|}\int_{Q}w\right)\left(\frac{1}{|Q|}\int_{Q}w^{\frac{1}{p-1}}\right)^{p-1}<\infty\qquad(1<p<\infty)\\
[w]_{A_{1}} & =\sup_{x\in\mathbb{R}^{n}}\frac{Mw(x)}{w(x)}<\infty
\end{split}
\]
where $M$ stands for the Hardy-Littlewood maximal function, namely
\[Mf(x)=\sup_{Q\ni x}\frac{1}{|Q|}\int_Q|f|\]
and each $Q$ is a cube with its sides parallel to the axis.

The $A_p$ conditions characterize the weighted $L^p$ boundedness of the maximal function, 
namely $w\in A_p$ if and only if the corresponding estimate
\begin{equation}
\begin{split}\|Mf\|_{L^{p}(w)} & \leq c_{w,n,p}\|f\|_{L^{p}(w)}\qquad(1<p<\infty)\\
\|Mf\|_{L^{1,\infty}(w)} & \leq c_{w,n}\|f\|_{L^{1}(w)}   \qquad  w\in A_1
\end{split}
\label{eq:MaxQual}
\end{equation}
holds, where $c_{w,n,p}$ is a constant that depends on the weight,
on the dimension $n$ and on $p$. Since Muckenhoupt's seminal work, many authors
such as Wheeden, Hunt, Coifman or Fefferman, got involved in the study
of weighted estimates, providing interesting results such for singular
integrals as well. 

In the last decade, one of the main problems in Harmonic Analysis has been the study of sharp norm inequalities for some of the classical operators on weighted Lebesgue spaces $L^p(w), \, 1<p<\infty$. Some examples of those kind of results include include the Hardy--Littlewood maximal operator, the Hilbert transform and more generally Calder\'on-Zygmund operators (C--Z operators). 
Given any of these operators $T$, the first part of this problem is to look for quantitative bounds of the norm $\|T\|_{L^p(w)}$ in terms of  the $A_p$ constant of the weight, namely an estimate like \eqref{model-quantitative}. The following step is to establish the sharp dependence, typically with respect to the power of $[w]_{A_p}$,   i.e. the optimality of $\beta$ in \eqref{model-quantitative}. In recent years, the answer to this last question has let a fruitful activity and development of new tools in Harmonic Analysis. Firstly, in the early 90s, Buckley \cite{B} identified the sharp exponent in the case of the Hardy--Littlewood maximal function, i.e.,
\begin{equation}\label{estMax}
\begin{split}\|Mf\|_{L^{p}(w)} & \leq c_{n,p}[w]_{A_{p}}^{\frac{1}{p-1}}\|f\|_{L^{p}(w)}\qquad(1<p<\infty),\\
\|Mf\|_{L^{p,\infty}(w)} & \leq c_{1}[w]_{A_{p}}^{\frac{1}{p}}\|f\|_{L(w)}\qquad(1\leq p<\infty).
\end{split}
\end{equation}

However, Buckley's work was not very influential  initially.  Quantitative estimates did not become
an important topic until the work of Astala, Iwaniec and Saksman \cite{AIS}
in which they proved that the solvavility of the Beltrami equation relied
upon the linear dependence on the $A_{2}$ constant of the Beurling
transform, namely on the following estimate
\[
\|Bf\|_{L^{2}(w)}\leq c_{n}[w]_{A_{2}}\|f\|_{L^{2}(w)}.
\]
That estimate was shortly after proved to be true by Petermichl and
Volberg \cite{PV}, and can be considered the beginning of the ``quantitative
estimates era''. Several authors have made more than interesting
contributions to this topic. Especially, the proof
of the $A_{2}$ conjecture \cite{H} (improved in \cite{HPAinfty}) and the quest for simpler proofs
has led to developments such as the sparse domination theory that
probably were unconceiveable years ago.

In this work we provide a criterium to decide the sharp dependence
of the $A_{p}$ constant for the weak-type $(p,p)$ estimate based on the behaviour of  $\|T\|_{L^{p}(\mathbb{R}^n)  \rightarrow L^{p,\infty}(\mathbb{R}^n) }  $ when $p\to 1$ and $p\to \infty$. The main result is the following. 

\begin{thm}
\label{Thm:LowerBoundWE}Given an operator $T$ such that for some
$1<p_{0}<\infty$ and for any $w\in A_{p_{0}}$ 
\begin{equation}\label{hyp}
\|T\|_{L^{p_{0},\infty}(w)}\leq c\left[w\right]_{A_{p_{0}}}^{\beta}
\end{equation}
then 
\[
\beta\geq\max\left\{ \gamma_{T};\ \frac{\alpha_{T}}{p_{0}-1}\right\} 
\]
where
\[
\alpha_{T}=\sup\left\{ \alpha\geq0\ :\ \forall\varepsilon>0\ \limsup_{p\rightarrow1^{+}}(p-1)^{\alpha-\varepsilon}\|T\|_{L^{p}\rightarrow L^{p,\infty}}=\infty\right\} 
\]
and 
\[
\gamma_{T}=\sup\left\{ \gamma\geq0\ :\ \forall\varepsilon>0\ \limsup_{p\rightarrow\infty}\frac{\|T\|_{L^{p}\rightarrow L^{p,\infty}}}{p^{\gamma-\varepsilon}}=\infty\right\} .
\]
\end{thm}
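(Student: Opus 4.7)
The approach adapts the strong-type argument of \cite{LPR} to the weak-type setting. The strategy is a sharp weak-type Rubio de Francia extrapolation: the hypothesis \eqref{hyp}, which controls $T$ at the single weighted exponent $p_0$, propagates to unweighted weak-type bounds $\|T\|_{L^p \rightarrow L^{p,\infty}}$ at all $p$, with explicit control of the resulting constants in terms of $p$, $p_0$, and $\beta$. Comparing these upper bounds with the lower bounds built into the definitions of $\alpha_T$ and $\gamma_T$ then yields the desired constraints on $\beta$.

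The main step, and the main technical obstacle, is to extract from \eqref{hyp} an unweighted estimate of the form
\[
\|T\|_{L^p(\mathbb{R}^n)\rightarrow L^{p,\infty}(\mathbb{R}^n)} \leq c_n\,\Phi_{\beta}(p,p_0),
\]
where $\Phi_\beta(p,p_0) \leq c\, p^\beta$ as $p \to \infty$ and $\Phi_\beta(p,p_0) \leq c\,(p-1)^{-\beta(p_0-1)}$ as $p \to 1^+$. I would obtain this by running the Rubio de Francia algorithm relative to the maximal function on $L^q$ for a suitable $q = q(p,p_0)$, building an $A_1$ weight whose $A_1$ constant is governed by Buckley's sharp bound \eqref{estMax}; applying the weighted weak-type hypothesis to this weight and unwinding the exponents should yield the claimed estimate. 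The two delicate points are tracking the $p$-dependence of $\|M\|_{L^{q}\rightarrow L^{q}}$ near the endpoints, and replacing the duality step (which fails in $L^{p,\infty}$) by a Kolmogorov-type substitute.

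Granted the bound above, the rest is routine. By definition of $\gamma_T$, for each $\varepsilon > 0$ there is a sequence $p_k \to \infty$ along which $\|T\|_{L^{p_k}\rightarrow L^{p_k,\infty}} \geq p_k^{\gamma_T - \varepsilon}$, so the extrapolation bound forces $p_k^{\gamma_T - \varepsilon} \leq c\, p_k^\beta$; letting $p_k \to \infty$ and then $\varepsilon \downarrow 0$ gives $\beta \geq \gamma_T$. Similarly, by definition of $\alpha_T$ there is a sequence $p_k \to 1^+$ with $(p_k - 1)^{\alpha_T - \varepsilon}\|T\|_{L^{p_k}\rightarrow L^{p_k,\infty}} \to \infty$; combining with $\|T\|_{L^{p_k}\rightarrow L^{p_k,\infty}} \leq c\,(p_k - 1)^{-\beta(p_0-1)}$ yields $(p_k - 1)^{\alpha_T - \varepsilon - \beta(p_0-1)} \to \infty$, which forces $\beta(p_0 - 1) \geq \alpha_T - \varepsilon$ for every $\varepsilon > 0$, and hence $\beta \geq \alpha_T/(p_0 - 1)$. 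Putting the two bounds together gives the theorem.
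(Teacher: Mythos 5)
Your proposal is correct and follows essentially the same route as the paper: a Rubio de Francia iteration built on $M$ in $L^{p}$ (for $p<p_{0}$) and in $L^{p'}$ (for $p>p_{0}$), together with the sharp bounds $\|M\|_{L^{p}}\lesssim (p-1)^{-1}$ and $\|M\|_{L^{p'}}\sim p$, yields exactly the unweighted estimates $\|T\|_{L^{p}\rightarrow L^{p,\infty}}\leq c(p-1)^{-\beta(p_{0}-1)}$ near $p=1$ and $\leq cp^{\beta}$ for large $p$, after which the limsup argument you describe closes the proof. The only minor difference is that the paper does not need a Kolmogorov-type substitute for duality: it dualizes, for each fixed level $\lambda$, the indicator of the superlevel set in $L^{p}$ against a norming function $h_{\lambda}\in L^{p'}$, and feeds $h_{\lambda}$ into the Rubio de Francia algorithm to build the $A_{p_{0}}$ weight.
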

To apply the preceding result we need to provide sharp unweighted
estimates in terms of $p$ and $p'$. We gather such estimates for
some cases of interest in Lemma \ref{LemUnweighted}. Now we present those operators. 

We say that $T$ is a Calderón-Zygmund operator if $T$ is bounded
on $L^{2}$ and admits the following representation for $f\in\mathcal{C}_{c}^{\infty}$
\[
Tf(x)=\int_{\mathbb{R}^{n}}K(x,y)f(y)dy\qquad x\not\in\supp f
\]
where $K:\mathbb{R}^{n}\times\mathbb{R}^{n}\setminus\left\{ (x,x)\,:\,x\in\mathbb{R}^{n}\right\} \rightarrow\mathbb{R}$
is a kernel satisfying the following properties
\begin{enumerate}
\item $|K(x,y)|\leq\frac{C_{K}}{|x-y|^{n}}$
\item $\left|K(x,y)-K(x',y)\right|\leq C\left(\frac{|x-x'|}{|x-y|}\right)^{\delta}\frac{1}{|x-y|^{n}}$
where $|x-x'|\leq\frac{1}{2}|x-y|$ for some $\delta>0$.
\end{enumerate}
Relying upon the preceding definitions, we have that given $b\in BMO$
and $T$ a Calderón-Zygmund operator we define the commutator  $[b,T]$
by
\[
[b,T]f(x)=b(x)Tf(x)-T(bf)(x).
\]
We recall that $b\in BMO$ if 
\[
\|b\|_{BMO}=\sup_{Q}\frac{1}{|Q|}\int_{Q}|b-b_{Q}|dx<\infty.
\]

At this point we are in the position to state the lemma that we announced before.
\begin{lem} \label{LemUnweighted}Let $1<p<\infty$, $T$ a Calderón-Zygmund operator
and $b\in BMO$. Then there exist constants $c_{i}>0$ such that the
following estimates hold
\[
\begin{split} &\|T\|_{L^{p}\longrightarrow L^{p,\infty}}\leq c_{1}p\\
 &\|[b,T]\|_{L^{p}\longrightarrow L^{p,\infty}}\leq c_{2}p'p^{2}\\
 & c_{3}(p')^{k-1}\leq\|M^{k}\|_{L^{p}\rightarrow L^{p,\infty}}\leq c_{4}(p')^{k-1}
\end{split}
\]
where $M^k$ stands for $M\circ\stackrel{(k)}{\dots}\circ M$. 

On the other hand, if $H$ is the Hilbert transform and $b(x)=\log|x|$,  we also have that there exist constants $c_i>0$ such that
\[
\begin{split} & c_{5}p\leq\|H\|_{L^{p}\longrightarrow L^{p,\infty}}\\
 & c_{6}\max\left\{ p',p^{2}\right\} \leq\|[b,H]\|_{L^{p}\longrightarrow L^{p,\infty}}
\end{split}
\]

\end{lem}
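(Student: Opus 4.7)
\medskip

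\noindent\textbf{Proof plan.} The plan splits into upper bounds, which I would derive from interpolation and known results, and lower bounds, which I would extract by testing each operator on an explicit function and computing the $L^{p,\infty}$ norm. For the upper bound on the Calder\'on--Zygmund operator $T$, I use that $T$ is weak-$(1,1)$ and strong-$(2,2)$: real interpolation gives $\|T\|_{L^{p}\to L^{p,\infty}}\le c$ uniformly on $(1,2]$, while for $p\ge 2$ the classical $\|T\|_{L^{p}\to L^{p}}\le cp$ (by duality from the $O((q-1)^{-1})$ behaviour of $T^{*}$ on $L^{q}$ as $q\to 1^{+}$) dominates the weak-type norm, producing $c_{1}p$. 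The commutator bound $\|[b,T]\|_{L^{p}\to L^{p,\infty}}\le c_{2}p'p^{2}$ I would read off from the corresponding strong-type estimate available in the literature (e.g.\ via the Coifman--Rochberg--Weiss conjugation method applied to sharp weighted bounds for $T$). For $M^{k}$, I iterate Buckley's $\|M\|_{L^{p}\to L^{p}}\le cp'$ together with the uniform weak-type estimate $\|M\|_{L^{p}\to L^{p,\infty}}\le c$:
\[
\|M^{k}\|_{L^{p}\to L^{p,\infty}}\le \|M\|_{L^{p}\to L^{p,\infty}}\,\|M^{k-1}\|_{L^{p}\to L^{p}}\le c_{4}(p')^{k-1}.
\]

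For the lower bounds on $M^{k}$ and $H$, the strategy is to test against simple indicators. Choosing $f=\chi_{B(0,1)}$ and proceeding by induction on $k$, one obtains $M^{k}f(x)\gtrsim (\log|x|)^{k-1}|x|^{-n}$ for $|x|$ large; maximising $\lambda|\{M^{k}f>\lambda\}|^{1/p}$ in $\lambda$ then yields the factor $(p')^{k-1}$ as $p\to 1^{+}$. For $H$ I pick $f=\chi_{[0,1]}$ and use the explicit formula $Hf(x)=\pi^{-1}\log|x/(x-1)|$, whose logarithmic singularities at $x=0,1$ reduce the $L^{p,\infty}$ computation to maximising $\lambda e^{-\pi\lambda/p}$, producing $\|Hf\|_{L^{p,\infty}}\gtrsim p$.

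The lower bound for $[b,H]$ with $b(x)=\log|x|$ requires two different test functions to reach the two regimes of $\max\{p',p^{2}\}$. Testing $f=\chi_{[0,1]}$ and computing directly gives $[b,H]f(x)\sim \log(x)/x$ for $|x|$ large; the $L^{p,\infty}$ norm of this tail produces $p'$ as $p\to 1^{+}$, via the same $(\log|x|)/|x|$ maximisation as in the $M^{2}$ case. Testing instead the $p$-dependent function $f=\chi_{[1,e^{p}]}$ (whose $L^{p}$ norm is bounded in $p$), a short calculation yields, for small $x>0$,
\[
[b,H]f(x)\approx \frac{1}{\pi}\int_{1}^{e^{p}}\frac{|\log x|+\log y}{y-x}\,dy \gtrsim \frac{1}{\pi}\bigl(p|\log x|+\tfrac{1}{2}p^{2}\bigr),
\]
and the $L^{p,\infty}$ norm of the dominant term $p|\log x|$ on a neighbourhood of $0$ is of order $p^{2}$.

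The main obstacle is the $p^{2}$ lower bound just sketched: it is invisible on any fixed test function, and only surfaces once the support is chosen of length $e^{p}$, so that integration of the commutator kernel against $1/y$ on $[1,e^{p}]$ produces a second logarithm whose $L^{p,\infty}$ norm contributes the extra factor of $p$. The remaining pieces are either classical or reduce to short explicit computations.
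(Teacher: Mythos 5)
Your lower-bound computations follow essentially the paper's route (explicit test functions and $(\log x)/x$-type maximisations), and your $p^{2}$ lower bound for $[b,H]$ via the $p$-dependent test function $\chi_{[1,e^{p}]}$ is a legitimate alternative: the paper instead keeps $f=\chi_{(0,1)}$ fixed and exploits the singularity $|[b,H]\chi_{(0,1)}(x)|\gtrsim(\log(1/x))^{2}$ near the origin, but both work (in your version the constant term $p^{2}/2$ alone already suffices, since $\|\chi_{[1,e^{p}]}\|_{L^{p}}\le e$). The $M^{k}$ upper bound is exactly the paper's. Two points, however, need attention.

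The genuine gap is the commutator upper bound: $\|[b,T]\|_{L^{p}\to L^{p,\infty}}\le c\,p'p^{2}$ cannot be ``read off from the corresponding strong-type estimate.'' As $p\to1^{+}$ the claimed weak bound behaves like $p'$, whereas the strong-type norm $\|[b,T]\|_{L^{p}\to L^{p}}$ is of order at least $(p')^{2}$ in that regime (the $M^{2}$-type behaviour you use for the lower bound is present, and $\|M^{2}\|_{L^{p}\to L^{p}}\simeq(p')^{2}$). So the strong bound is strictly worse than the claimed weak bound near $p=1$ and cannot imply it; the Coifman--Rochberg--Weiss conjugation method likewise only yields strong-type information. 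The paper gets the saving of one factor of $p'$ from a weak-type Fefferman--Stein-type inequality $\|g\|_{L^{p,\infty}}\le cp\,\|M_{\delta}^{\sharp}g\|_{L^{p,\infty}}$ (proved via rearrangements and the adjoint Hardy operator), combined with the pointwise bound $M_{\delta}^{\sharp}([b,T]f)\lesssim M_{\varepsilon}(Tf)+M^{2}f$ and the observation that the outermost maximal function only needs to be estimated in weak type: $\|M^{2}f\|_{L^{p,\infty}}\le c\|Mf\|_{L^{p}}\le cp'\|f\|_{L^{p}}$, costing a single $p'$ rather than two. This mechanism is absent from your plan and is not replaceable by any strong-type input.

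A smaller issue: for $\|T\|_{L^{p}\to L^{p,\infty}}\le c_{1}p$ you invoke real interpolation to get a uniform constant on $(1,2]$. The off-the-shelf Marcinkiewicz theorem produces a constant that blows up as $p\to1^{+}$, and the identification $(L^{1},L^{2})_{\theta,\infty}=L^{p,\infty}$ carries $\theta$-dependent equivalence constants, so the uniformity needs an actual argument. The claim is true and fixable --- e.g.\ by a direct Calder\'on--Zygmund decomposition of $|f|^{p}$ at height $\lambda^{p}$, which gives $\lambda^{p}|\{|Tf|>\lambda\}|\le C\|f\|_{L^{p}}^{p}$ with $C$ independent of $p\in(1,2]$ --- but as stated it is not justified. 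The paper sidesteps this by running the same $\|g\|_{L^{p,\infty}}\le cp\,\|M_{\delta}^{\sharp}g\|_{L^{p,\infty}}$ machinery with the pointwise bound $M_{\delta}^{\sharp}(Tf)\lesssim Mf$.
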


Combining some known estimates in the literature and the preceding results
we obtain the following result.
\begin{thm}
\label{Thm:ApBounds}Let $1<p<\infty$, $k$ a positive integer, $T$
a Calderón-Zygmund operator and $b\in BMO$. Then 

\begin{enumerate}
\item \textup{$\|T\|_{L^{p,\infty}(w)}\leq c\left[w\right]_{A_{p}}$ and
the exponent of the $A_{p}$ constant is sharp.}
\item $\|[b,T]\|_{L^{p,\infty}(w)}\leq c\left[w\right]_{A_{p}}^{\rho_{p}}$
where have that $\max\left\{ 2,\frac{1}{p-1}\right\} \leq\rho_{p}\leq\max\left\{ 2,p'\right\} $.
\item $\|M^{k}\|_{L^{p}\rightarrow L^{p,\infty}}\leq c[w]_{A_{p_{0}}}^{\eta_{p_{0}}}$
with $\frac{k-1}{p_{0}-1}\leq\eta_{p_{0}}\leq\frac{1}{p}+\frac{k-1}{p_{0}-1}$
\end{enumerate}
\end{thm}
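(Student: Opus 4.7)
The plan is to treat each of the three items by pairing a known (or easily derivable) upper bound with the lower bound that Theorem \ref{Thm:LowerBoundWE} produces automatically from the unweighted asymptotics collected in Lemma \ref{LemUnweighted}. Throughout, the upper bounds live in the standard quantitative weighted theory, and the sharpness/lower bounds reduce to a one-line application of Theorem \ref{Thm:LowerBoundWE} once the relevant $\alpha_T$ and $\gamma_T$ are read off from the lemma.

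For item (1), the inequality $\|T\|_{L^{p,\infty}(w)}\leq c[w]_{A_p}$ is a consequence of the $A_2$ theorem together with sharp extrapolation (which promotes the linear $A_2$ bound to a linear $A_p$ strong-type bound for all $1<p<\infty$) and the trivial fact that weak-type $(p,p)$ is controlled by strong-type $(p,p)$. Sharpness is obtained by taking $T=H$: from Lemma \ref{LemUnweighted} we have $\|H\|_{L^p\to L^{p,\infty}}\geq c_5 p$, hence $\gamma_H\geq 1$, and then Theorem \ref{Thm:LowerBoundWE} yields $\beta\geq\gamma_H\geq 1$, matching the upper bound.

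For item (2), the upper bound $\rho_{p_0}\leq\max\{2,p_0'\}$ is the known quantitative weak-type estimate for commutators available in the literature (Chung--Pereyra--P\'erez / Ortiz-Caraballo--P\'erez--Rela), which I would simply invoke. For the lower bound I apply Theorem \ref{Thm:LowerBoundWE} to $[b,H]$ with $b(x)=\log|x|$: the bound $\|[b,H]\|_{L^p\to L^{p,\infty}}\geq c_6\max\{p',p^2\}$ gives simultaneously $\gamma_{[b,H]}\geq 2$ (from the $p^2$ factor as $p\to\infty$) and $\alpha_{[b,H]}\geq 1$ (since $p'\sim 1/(p-1)$ as $p\to 1^+$), so Theorem \ref{Thm:LowerBoundWE} delivers $\rho_{p_0}\geq\max\{2,1/(p_0-1)\}$.

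For item (3), the upper bound follows by factoring $M^k=M\circ M^{k-1}$ and combining Buckley's sharp strong-type bound $\|M\|_{L^{p_0}(w)\to L^{p_0}(w)}\leq c[w]_{A_{p_0}}^{1/(p_0-1)}$, iterated $k-1$ times on the inner copies, with Buckley's weak-type bound $\|M\|_{L^{p_0}(w)\to L^{p_0,\infty}(w)}\leq c[w]_{A_{p_0}}^{1/p_0}$ for the outermost operator, giving the exponent $1/p_0+(k-1)/(p_0-1)$. For the lower bound, the inequality $\|M^k\|_{L^p\to L^{p,\infty}}\geq c_3(p')^{k-1}$ from Lemma \ref{LemUnweighted} combined with $p'\sim 1/(p-1)$ as $p\to 1^+$ gives $\alpha_{M^k}\geq k-1$, and Theorem \ref{Thm:LowerBoundWE} then forces $\eta_{p_0}\geq (k-1)/(p_0-1)$. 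The only real obstacle is part (2), whose upper bound is not produced by this paper's machinery and must be imported from the commutator literature; everything else is either a direct composition argument or a routine application of Theorem \ref{Thm:LowerBoundWE}.
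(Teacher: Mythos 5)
Your lower-bound arguments and your treatment of item (3) coincide with the paper's: the paper also factors $M^{k}=M\circ M^{k-1}$, uses Buckley's strong bound $[w]_{A_{p}}^{1/(p-1)}$ on the inner iterates and the weak bound $[w]_{A_{p}}^{1/p}$ on the outer one, and in every item reads off $\alpha_T$ and $\gamma_T$ from Lemma \ref{LemUnweighted} and feeds them into Theorem \ref{Thm:LowerBoundWE}. The problems are in the two upper bounds you do not prove. For item (1), sharp extrapolation from the $A_{2}$ theorem gives the strong-type exponent $\max\{1,\tfrac{1}{p-1}\}$, which equals $\tfrac{1}{p-1}>1$ when $1<p<2$; dominating the weak norm by the strong norm therefore does \emph{not} yield a linear weak-type $(p,p)$ bound in that range. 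The linear weak-type estimate $\|T\|_{L^{p}(w)\to L^{p,\infty}(w)}\leq c_{n}[w]_{A_{p}}$ for all $1<p<\infty$ is a genuinely stronger statement, and the paper obtains it by citing \cite{HLMORSUT} rather than by extrapolation. Your derivation as written fails for $p<2$.

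For item (2), the exponent $\max\{2,p'\}$ cannot simply be ``imported'': extrapolating the known strong-type result $\|[b,T]\|_{L^{2}(w)}\lesssim[w]_{A_{2}}^{2}$ gives $2\max\{1,\tfrac{1}{p-1}\}=\tfrac{2}{p-1}$ for $p<2$, which is strictly worse than $p'=\tfrac{p}{p-1}$, and I am not aware of a reference that states the weak-type bound in the form claimed. This upper bound is in fact the main new content of this part of the paper: one dominates $|[b,T]f|$ pointwise by the sparse operators $\mathcal{T}_{b,\mathcal{S}}$ and $\mathcal{T}_{b,\mathcal{S}}^{*}$ of \cite{LORR}, controls each of them by the composition $\mathcal{A}_{\tilde{\mathcal{S}}}\circ\mathcal{A}_{\tilde{\mathcal{S}}}$ using the $BMO$ hypothesis, and then applies Theorem \ref{Thm:HyLi} twice --- once the strong bound $[w]_{A_{p}}^{\max\{1,1/(p-1)\}}$ and once the weak bound $[w]_{A_{p}}$ --- so that the exponents add to $1+\max\{1,\tfrac{1}{p-1}\}=\max\{2,p'\}$. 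You acknowledge this is the one piece your plan does not supply, but since it is exactly the nontrivial step, the proposal as it stands does not prove item (2).
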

At this point some remarks are in order. We observe that our method
is completely satisfactory in the case of Calderón-Zygmund operators,
contrarily to what happens in the case of the commutator $[b,T]$
and for the maximal function $M$ and its iterations $M^{k}$. In the case of the maximal function, that fact is not a suprise. 
The information that the method provides comes from the relationship of the boundedness
constant of the operator with the exponents  of $p$ and $p'$ and in this case both exponents are zero, thus, the method cannot provide any kind of information. In the case of the commutator it is not clear whether the upper bound can be improved or the lower one should be larger.

The rest of the paper is organized as follows. Section \ref{sec:ProoTwmLBWE}
is devoted to the proof of Theorem \ref{Thm:LowerBoundWE}. Lemma \ref{LemUnweighted}
is established in Section \ref{sec:LemUnweighted}. We end up this
paper with the proof of Theorem \ref{Thm:ApBounds} which is presented
in Section \ref{sec:ThmApBounds}.
\section{Proof of Theorem \ref{Thm:LowerBoundWE}\label{sec:ProoTwmLBWE}}
As we mentioned before, we will adapt here the main arguments from \cite{LPR}  which in turn is based on ideas from \cite{CMP} and \cite{Duo}.   

Firstly we prove that $\frac{\alpha_{T}}{p_{0}-1}\leq\beta$. If $\alpha_{T}=0$
there's nothing to prove, so let us assume that $\alpha_{T}>0$. We
define then the following Rubio de Francia algorithm (see for instance \cite{GCRdF})
\[
R_{p}f(x)=\sum_{k=0}^{\infty}\frac{M^{k}f(x)}{2^{k}\|M\|_{L^{p}}^{k}}.
\]
$R$ satisfies the following properties
\begin{enumerate}
\item $h\leq R_{p}h$
\item $\|R_{p}h\|_{L^{p}}\leq2\|h\|_{L^{p}}$
\item $R_{p}h\in A_{1}$. Furthermore $[R_{p}h]_{A_{1}}\leq 2 \, \|M\|_{L^{p}}.$
\end{enumerate}
We have that 
\[
\begin{split}\|Tf\|_{L^{p,\infty}} & =\sup_{\lambda>0}\lambda\left|\left\{ x\in\mathbb{R}^{n}\,:\,|Tf(x)|>\lambda\right\} \right|^{\frac{1}{p}}\\
 & =\sup_{\lambda>0}\lambda\left(\int_{\left\{ x\in\mathbb{R}^{n}\,:\,|Tf(x)|>\lambda\right\} }\right)^{\frac{1}{p}}\\
 & =\sup_{\lambda>0}\lambda\left(\int_{\left\{ x\in\mathbb{R}^{n}\,:\,|Tf(x)|>\lambda\right\} }\left(R_{p}f\right)^{-(p_{0}-p)\frac{p}{p_{0}}}\left(R_{p}f\right)^{(p_{0}-p)\frac{p}{p_{0}}}dx\right)^{\frac{1}{p}}\\
 & \leq\sup_{\lambda>0}\lambda\left(\int_{\left\{ x\in\mathbb{R}^{n}\,:\,|Tf(x)|>\lambda\right\} }\left(R_{p}f\right)^{-(p_{0}-p)}dx\right)^{\frac{1}{p_{0}}}\left(\int_{\left\{ x\in\mathbb{R}^{n}\,:\,|Tf(x)|>\lambda\right\} }\left(R_{p}f\right)^{p}dx\right)^{\frac{p_{0-p}}{pp_{0}}}\\
 & \leq\|Rf\|_{L^{p}(\mathbb{R}^{n})}^{\frac{p_{0}-p}{p_{0}}}\sup_{\lambda>0}\lambda\left(\int_{\left\{ x\in\mathbb{R}^{n}\,:\,|Tf(x)|>\lambda\right\} }\left(R_{p}f\right)^{-(p_{0}-p)}dx\right)^{\frac{1}{p_{0}}}\\
 & =\|f\|_{L^{p}(\mathbb{R}^{n})}^{\frac{p_{0}-p}{p_{0}}}\|Tf\|_{L^{p_{0},\infty}\left(\left(R_{p}f\right)^{-(p_{0}-p)}\right)}
\end{split}
\]
We observe that this estimate holds for both $p<p_0$ or $p>p_0$. Now, if we fix $1<p<p_0$ it was established in \cite{LPR} that
$\left[\left(R_{p}f\right)^{-(p_{0}-p)}\right]_{A_{p_{0}}}\leq c_n \|M\|_{L^p}^{p_0-p}$. Taking that into account and applying the hypothesis \eqref{hyp}  we have that
\[
\begin{split}\|f\|_{L^{p}(\mathbb{R}^{n})}^{\frac{p_{0}-p}{p_{0}}}\|Tf\|_{L^{p_{0},\infty}\left(\left(Rf\right)^{-(p_{0}-p)}\right)} & \leq c\left[\left(R_{p}f\right)^{-(p_{0}-p)}\right]_{A_{p_{0}}}^{\beta}\|f\|_{L^{p}(\mathbb{R}^{n})}^{\frac{p_{0}-p}{p_{0}}}\|f\|_{L^{p_{0}}(\left(R_{p}f\right)^{-(p_{0}-p)})}\\
 & \leq c\left[\left(R_{p}f\right)^{-(p_{0}-p)}\right]_{A_{p_{0}}}^{\beta}\|f\|_{L^{p}(\mathbb{R}^{n})}.
\end{split}
\]
Then, 
\[
\|Tf\|_{L^{p,\infty}}\leq c\|M\|_{L^{p}}^{\beta\left(p_{0}-p\right)}\|f\|_{L^{p}(\mathbb{R}^{n})}\qquad1<p<p_{0}
\]
Now we recall that 
\[
\|M\|_{L^{p}(\mathbb{R}^{n})}\leq c\frac{1}{p-1}
\]
Then for $p$ close to $1$ we get
\[
\|T\|_{ L^{p} \rightarrow L^{p,\infty}   }  \leq c\left(p-1\right)^{-\beta(p_{0}-p)}\leq c\left(p-1\right)^{-\beta(p_{0}-1)}
\]
Since $\alpha_{T}>0$ if $\alpha_{T}-\varepsilon>0$, multiplying
by $(p-1)^{\alpha_{T}-\varepsilon}$, and taking $\limsup$, by the
definition of $\alpha_{T}$
\[
\begin{split}\infty & =\begin{split}\limsup_{p\rightarrow1\text{\textsuperscript{+}}} & \|T\|_{L^{p,\infty}\rightarrow L^{p}}(p-1)^{\alpha_{T}-\varepsilon}\end{split}
\\
 & \leq\limsup_{p\rightarrow1\text{\textsuperscript{+}}}c\left(p-1\right)^{-\beta(p_{0}-1)+\alpha_{T}-\varepsilon}
\end{split}
\]
we have that 
\[
-\beta(p_{0}-1)+\alpha_{T}-\varepsilon<0\iff\frac{\alpha_{T}-\varepsilon}{p_{0}-1}<\beta
\]
And taking inf in $\varepsilon$, 
\[
\frac{\alpha_{T}}{p_{0}-1}\leq\beta
\]
Let us prove now that $\gamma_{T}\leq\beta$. We follow the same extrapolation
ideas, but now we use the dual space $L^{p'}(\mathbb{R}^{n})$. Fix
$p>p_{0}$ and $f\in L^{p}(\mathbb{R}^{n})$. Firstly we observe that
our hypothesis is equivalent to
\[
tw\left(\left\{ x\in\mathbb{R}^{n}\,:\,|Tf(x)|>t\right\} \right)^{\frac{1}{p_{0}}}\leq c\left[w\right]_{A_{p_{0}}}^{\beta}\|f\|_{L^{p_{0}}}\qquad t>0.
\]
Now
\[
\begin{split}\|Tf\|_{L^{p,\infty}(\mathbb{R}^{n})} & =\sup_{\lambda>0}\lambda\left(\int_{\left\{ x\in\mathbb{R}^{n}\,:\,|Tf(x)|>\lambda\right\} }\right)^{\frac{1}{p}}\\
 & =\sup_{\lambda>0}\lambda\left\Vert \chi_{\left\{ x\in\mathbb{R}^{n}\,:\,|Tf(x)|>\lambda\right\} }\right\Vert _{L^{p}(\mathbb{R}^{n})}
\end{split}
\]
Now by duality for each $\lambda>0$ we can find $h_{\lambda}\in L^{p'}(\mathbb{R}^{n})$
, $h_{\lambda}\geq0$, $\left\Vert h_{\lambda}\right\Vert _{L^{p'}(\mathbb{R}^{n})}=1$
such that 
\[
\begin{split} & \lambda\left\Vert \chi_{\left\{ x\in\mathbb{R}^{n}\,:\,|Tf(x)|>\lambda\right\} }\right\Vert _{L^{p}(\mathbb{R}^{n})}\\
 & =\lambda\int_{\left\{ x\in\mathbb{R}^{n}\,:\,|Tf(x)|>\lambda\right\} }h_{\lambda}\\
 & \leq\lambda\int_{\left\{ x\in\mathbb{R}^{n}\,:\,|Tf(x)|>\lambda\right\} }\left(R_{p'}h_{\lambda}\right)^{\frac{p-p_{0}}{p_{0}(p-1)}}h_{\lambda}^{\frac{p(p_{0}-1)}{p_{0}(p-1)}}dx\\
 & \leq\lambda\left(\int_{\left\{ x\in\mathbb{R}^{n}\,:\,|Tf(x)|>\lambda\right\} }\left(R_{p'}h_{\lambda}\right)^{\frac{p-p_{0}}{p-1}}\right)^{\frac{1}{p_{0}}}\left(\int_{\mathbb{R}^{n}}h_{\lambda}^{p'}dx\right)^{\frac{1}{p_{0}'}}\\
 & =\lambda\left(\int_{\left\{ x\in\mathbb{R}^{n}\,:\,|Tf(x)|>\lambda\right\} }\left(R_{p'}h_{\lambda}\right)^{\frac{p-p_{0}}{p-1}}\right)^{\frac{1}{p_{0}}}
\end{split}
\]
Now we observe that $w=\left(R'h_{\lambda}\right)^{\frac{p-p_{0}}{p-1}}$
is an $A_{p_{0}}$ weight. Using hypothesis this yields, 
\[
tw\left(\left\{ x\in\mathbb{R}^{n}\,:\,|Tf(x)|>t\right\} \right)^{\frac{1}{p_{0}}}\leq c\left[w\right]_{A_{p_{0}}}^{\beta}\|f\|_{L^{p_{0}}}\qquad t>0
\]
In particular that inequality holds for $t=\lambda$. Then we have
that 
\[
\begin{split} & \lambda\left(\int_{\left\{ x\in\mathbb{R}^{n}\,:\,|Tf(x)|>\lambda\right\} }\left(R_{p'}h_{\lambda}\right)^{\frac{p-p_{0}}{p-1}}\right)^{\frac{1}{p_{0}}}\\
 & \leq c\left[\left(R_{p'}h_{\lambda}\right)^{\frac{p-p_{0}}{p-1}}\right]_{A_{p_{0}}}^{\beta}\left(\int_{\mathbb{R}^{n}}\left|f\right|^{p_{0}}\left(R_{p'}h_{\lambda}\right)^{\frac{p-p_{0}}{p-1}}dx\right)^{\frac{1}{p_{0}}}\\
\text{Hölder} & \leq c\left[\left(R_{p'}h_{\lambda}\right)^{\frac{p-p_{0}}{p-1}}\right]_{A_{p_{0}}}^{\beta}\left(\int_{\mathbb{R}^{n}}\left|f\right|^{p}dx\right)^{\frac{1}{p}}\left(\int_{\mathbb{R}^{n}}\left(R_{p'}h_{\lambda}\right)^{p'}dx\right)^{\frac{1}{p'}\frac{p-p_{0}}{p_{0}(p-1)}}\\
 & \leq c\left[\left(R_{p'}h_{\lambda}\right)^{\frac{p-p_{0}}{p-1}}\right]_{A_{p_{0}}}^{\beta}\left(\int_{\mathbb{R}^{n}}\left|f\right|^{p}dx\right)^{\frac{1}{p}}\\
\text{Jensen} & \leq c\left[R_{p'}h_{\lambda}\right]_{A_{1}}^{\beta\frac{p-p_{0}}{p-1}}\left(\int_{\mathbb{R}^{n}}\left|f\right|^{p}dx\right)^{\frac{1}{p}}\\
 & \leq c\left\Vert M\right\Vert _{L^{p'}(\mathbb{R}^{n})}^{\beta\frac{p-p_{0}}{p-1}}\left(\int_{\mathbb{R}^{n}}\left|f\right|^{p}dx\right)^{\frac{1}{p}}.
\end{split}
\]
Then we have that
\[
\lambda\left(\int_{\left\{ x\in\mathbb{R}^{n}\,:\,|Tf(x)|>\lambda\right\} }\right)^{\frac{1}{p}}\leq c\left\Vert M\right\Vert _{L^{p'}(\mathbb{R}^{n})}^{\beta\frac{p-p_{0}}{p-1}}\left(\int_{\mathbb{R}^{n}}\left|f\right|^{p}dx\right)^{\frac{1}{p}}
\]
and consequently
\[
\|Tf\|_{L^{p,\infty}(\mathbb{R}^{n})}\leq c\left\Vert M\right\Vert _{L^{p'}(\mathbb{R}^{n})}^{\beta\frac{p-p_{0}}{p-1}}\|f\|_{L^{p}(\mathbb{R}^{n})}.
\]
To finish the proof we recall that, for large $p>>p_{0}$, we have
that $\|M\|_{L^{p'}}\sim p.$ Therefore, we have
that 
\[
\|Tf\|_{L^{p,\infty}(\mathbb{R}^{n})}\leq cp^{\beta\frac{p-p_{0}}{p-1}}\leq cp^{\beta}.
\]
Since $p>>p_{0}$ we have that, dividing by $p^{\gamma_{T}-\varepsilon}$
and taking upper limits, we obtain 
\[
\infty=\limsup_{p\rightarrow\infty}\frac{\|Tf\|_{L^{p,\infty}(\mathbb{R}^{n})}}{p^{\gamma_{T}-\varepsilon}}\leq c\limsup_{p\rightarrow\infty}p^{\beta-\gamma_{T}+\varepsilon}.
\]
Consequently $\beta\geq\gamma_{T}$ and we're done.

\section{Proof of Lemma \ref{LemUnweighted}\label{sec:LemUnweighted}}

\subsection{Lemmata} In order to prove the unweighted estimates we need some lemmas. We present first some of the of the main ingredients of those results. We recall that the sharp maximal function $M_s^\sharp f$ by
\[M_s^\sharp f(x)=\sup_{Q \ni x}\left( \frac{1}{|Q|}\int_Q|f-f_Q|^s\right)^{\frac{1}{s}} \qquad 0<s<\infty\] 
In the case in which the supremum is taken only over dyadic cubes we write $M_s^{\sharp,d}$. We note that $M_s^\sharp$ is comparable so replacing one by the other when dealing with norm estimates will not make a difference for us. Analogously we will denote $M_s(f)=M(|f|^s)^\frac{1}{s}$.

Given a measurable function $f$ we define its non increasing rearrangement by
\[f^*(t)=\inf\{\lambda>0 : d_f(\lambda)\le t\}\]
where $d_f(\lambda)=|\{ x \in\mathbb{R}^n : |f(x)|>\lambda\}|$.

\begin{lem}
Let $0<\delta,\gamma<1$. There exists a constant $c=c_{n,\gamma,\delta}$
such that for any measurable function 
\begin{equation} %\label{keyrearreng1}
f^{*}(t)\leq c\left(M_{\delta}^{\sharp}f\right)^{*}(\gamma t)+f^{*}(2t)\qquad t>0.\label{eq:2.5.2}
\end{equation}
\end{lem}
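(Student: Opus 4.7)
My plan is to prove the inequality by a Calder\'on--Zygmund stopping time decomposition, exploiting the pointwise defining property $|Q|^{-1}\int_Q |f-f_Q|^\delta \le (M_\delta^\sharp f(y))^\delta$ valid for every $y\in Q$. Fix $t>0$ and set $\lambda := f^*(2t)$ and $\mu := (M_\delta^\sharp f)^*(\gamma t)$, assuming both are finite (otherwise the claim is vacuous). By the definition of the non-increasing rearrangement, the target reduces to producing a constant $c=c_{n,\gamma,\delta}$ with
\[
|\{x\in\mathbb{R}^n : |f(x)| > \lambda + c\mu\}| \le t.
\]
The crucial input is $|\{|f|>\lambda\}|\le 2t<\infty$, which makes a Calder\'on--Zygmund stopping time on $|f|$ at level $\lambda$ meaningful: it produces a disjoint family $\{Q_j\}$ of maximal dyadic cubes with $\lambda < f_{Q_j}\le 2^n\lambda$, $\{|f|>\lambda\} \subseteq \bigcup_j Q_j$ modulo null sets, and $|\bigcup_j Q_j|\le C_n\, t$ (via the weak-type bound for the dyadic maximal operator or direct CZ bookkeeping).

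On each stopping cube the inclusion $|f|\le |f-f_{Q_j}|+2^n\lambda$, Chebyshev in $L^\delta$, and the sharp-maximal oscillation bound together give
\[
|Q_j\cap\{|f|>2^n\lambda+c_1\mu\}| \le (c_1\mu)^{-\delta}\,|Q_j|\,\inf_{y\in Q_j}(M_\delta^\sharp f(y))^\delta.
\]
I would split the $Q_j$ into \emph{good} cubes (those containing some $y$ with $M_\delta^\sharp f(y)\le\mu$, on which $\inf\le\mu^\delta$) and \emph{bad} cubes (those entirely inside $\{M_\delta^\sharp f>\mu\}$, whose total measure is therefore at most $\gamma t$); on good cubes the estimate becomes $c_1^{-\delta}|Q_j|$, while on bad cubes I use only the trivial bound $|Q_j|$. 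Summing,
\[
|\{|f|>2^n\lambda+c_1\mu\}| \le c_1^{-\delta}\,C_n\cdot 2t + \gamma t,
\]
which is made $\le t$ by choosing $\gamma$ small and $c_1$ large, both depending only on $n$ and $\delta$.

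The main obstacle I anticipate is that the dyadic stopping time naturally delivers $2^n\lambda = 2^n f^*(2t)$ on the left-hand side of the level-set estimate, whereas the lemma asserts coefficient $1$ in front of $f^*(2t)$. Closing this $2^n$--vs.--$1$ gap is the delicate point: one option is to replace dyadic cubes with a Besicovitch-type selection tuned so that $f_{Q_j}\le (1+\varepsilon)\lambda$ for arbitrary $\varepsilon>0$, and then absorb the residual $\varepsilon\lambda$ into the oscillation term by a second application of the $L^\delta$ oscillation bound at the price of enlarging $c$; another option is a bootstrap that iteratively improves the coefficient of $f^*(2t)$. I expect this refinement to be the most technical step of the proof.
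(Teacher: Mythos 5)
The paper does not actually prove this lemma; it cites Bagby--Kurtz \cite{BK1,BK2} and the expositions in \cite{Pe1,OPR}, so your proposal has to be measured against the standard argument there. Your reduction to showing $|\{|f|>\lambda+c\mu\}|\le t$ with $\lambda=f^*(2t)$, $\mu=(M_\delta^\sharp f)^*(\gamma t)$ is correct, as is the good/bad splitting of cubes according to whether they meet $\{M_\delta^\sharp f\le\mu\}$, and the local Chebyshev estimate in $L^\delta$. The fatal step is the choice of stopping time: you stop on the \emph{averages of} $|f|$ at height $\lambda$ and claim $|\bigcup_j Q_j|\le C_n t$. This is false. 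The only information available is $|\{|f|>\lambda\}|\le 2t$; the mass $\int_{\{|f|>\lambda\}}|f|$ is completely uncontrolled, so the set $\{M^d|f|>\lambda\}$ can have measure arbitrarily large compared to $t$ (take $f=N\chi_{E_1}+\lambda\chi_{E_2}$ with $|E_1|=t$ and $N\to\infty$: the stopping cubes must swallow a set of measure $\sim Nt/\lambda$). Since your good-cube contribution is $c_1^{-\delta}\sum_{\text{good}}|Q_j|$, no choice of $c_1=c_1(n,\delta,\gamma)$ can make this $\le t$. A secondary problem is that for merely locally integrable $f$ the maximal cubes with $f_{Q_j}>\lambda$ need not exist at all (large-scale averages of $|f|$ need not drop below $\lambda$ even though $|\{|f|>\lambda\}|\le 2t$).

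The classical proof avoids both problems by running the Calder\'on--Zygmund stopping time not on $|f|$ but on the indicator $\chi_E$, $E=\{|f|>\lambda\}$, at a fixed height $a\in(0,1)$: since $|E|\le 2t<\infty$, maximal dyadic cubes $Q_j$ with $|E\cap Q_j|>a|Q_j|$ exist, cover $E$ up to a null set, are disjoint, and satisfy $\sum_j|Q_j|\le 2t/a$ --- this is the measure bound your construction cannot deliver. The price is that one no longer knows $f_{\widehat{Q_j}}\le 2^n\lambda$; instead one knows the parent $\widehat{Q_j}$ satisfies $|\widehat{Q_j}\cap\{|f|\le\lambda\}|\ge(1-a)|\widehat{Q_j}|$, and a second application of Chebyshev in $L^\delta$ on that large subset gives $|f_{\widehat{Q_j}}|\le\lambda+(1-a)^{-1/\delta}\inf_{\widehat{Q_j}}M_\delta^\sharp f\le\lambda+C\mu$ on good cubes. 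This is exactly what resolves the ``coefficient $2^n$ versus $1$'' issue you flag at the end: the excess over $\lambda$ is absorbed into the $\mu$ term, not into $f^*(2t)$. Your proposed fixes for that issue do not work --- a Besicovitch selection with $f_{Q_j}\le(1+\varepsilon)\lambda$ leaves a residual $\varepsilon\lambda$ that cannot be absorbed into the oscillation term because $\lambda$ and $\mu$ are unrelated in size, and no bootstrap turns a coefficient $2^n$ on $f^*(2t)$ into $1$. I would rewrite the proof with the density stopping time; the rest of your scheme (good/bad cubes, $|\{M_\delta^\sharp f>\mu\}|\le\gamma t$, choosing $c_1$ so that $2^{n+1}c_1^{-\delta}/a+\gamma\le 1$) then goes through.
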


These type of estimates in this context go back to the work of R. Bagby and D. Kurtz in the mid 80s (see \cite{BK1} and \cite{BK2}). The proof of the Lemma can be found in \cite{Pe1} in the context of  $A_{p}$ weights and in \cite{OPR}  in the context of $A_{\infty}$  weights.  As a consequence we have the following.

\begin{lem}
\label{fFsharp}Let $1\leq p<\infty$, and $0<\delta<1$. Then there
exists a cosntant $c=c_{n,\delta}$ such that 
\[
\|f\|_{L^{p,\infty}}\leq cp\left\Vert M_{\delta}^{\sharp,d}f\right\Vert _{L^{p,\infty}}
\]
for each function $f$ such that $\left|\left\{ x\ :\ |f(x)|>t\right\} \right|<\infty$  for every $t>0$.
\end{lem}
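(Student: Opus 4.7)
The plan is to start from the pointwise rearrangement inequality \eqref{eq:2.5.2} and iterate it so as to absorb the residual term $f^{*}(2t)$ into a convergent series, then pass to $L^{p,\infty}$ norms while tracking the dependence on $p$.

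First I would fix $\gamma=\tfrac{1}{2}$ in \eqref{eq:2.5.2}, substitute $t\mapsto 2^{j}t$ recursively, and obtain by induction
\[
f^{*}(t)\leq c\sum_{k=0}^{N-1}(M_{\delta}^{\sharp}f)^{*}(2^{k-1}t)+f^{*}(2^{N}t),\qquad N\geq 1.
\]
The hypothesis $|\{x:|f(x)|>\lambda\}|<\infty$ for every $\lambda>0$ is precisely the condition $f^{*}(s)\to 0$ as $s\to\infty$, which allows me to send $N\to\infty$ and discard the tail to produce the series bound
\[
f^{*}(t)\leq c\sum_{k=0}^{\infty}(M_{\delta}^{\sharp}f)^{*}(2^{k-1}t).
\]

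Next I would multiply by $t^{1/p}$, rewrite $t^{1/p}=(2^{k-1})^{-1/p}(2^{k-1}t)^{1/p}$ termwise, and take the supremum in $t>0$ on both sides. Since a change of variable shows $\sup_{t>0}(2^{k-1}t)^{1/p}(M_{\delta}^{\sharp}f)^{*}(2^{k-1}t)=\|M_{\delta}^{\sharp}f\|_{L^{p,\infty}}$, the estimate collapses to
\[
\|f\|_{L^{p,\infty}}\leq c\,\frac{2^{1/p}}{1-2^{-1/p}}\,\|M_{\delta}^{\sharp}f\|_{L^{p,\infty}}.
\]

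The place where the linear factor in $p$ appears is the elementary inequality $\tfrac{1}{1-2^{-1/p}}\leq c\,p$ for every $p\geq 1$, which follows from $1-2^{-1/p}=1-e^{-(\log 2)/p}\geq\tfrac{\log 2}{2p}$. Combining this with $2^{1/p}\leq 2$ gives $\|f\|_{L^{p,\infty}}\leq cp\,\|M_{\delta}^{\sharp}f\|_{L^{p,\infty}}$. Finally, to obtain the statement with $M_{\delta}^{\sharp,d}$ in place of $M_{\delta}^{\sharp}$ I would invoke the comparability between the sharp maximal function and its dyadic (or shifted-dyadic) analogue mentioned in the preliminaries, which only adjusts the constant $c$. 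I do not expect any substantial obstacle: the only delicate point is the iteration/summation, where one must verify both that the tail $f^{*}(2^{N}t)$ really vanishes under the hypothesis and that the geometric constant has the correct linear-in-$p$ blow-up.
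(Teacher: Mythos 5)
Your argument is correct and follows essentially the same route as the paper: iterate the rearrangement inequality \eqref{eq:2.5.2}, use the hypothesis $|\{x:|f(x)|>t\}|<\infty$ to discard the tail $f^{*}(2^{N}t)$, and extract the linear factor in $p$ from the resulting sum. The only cosmetic difference is that the paper majorizes the series by the adjoint Hardy operator $S$ and uses $\|S(g^{*})\|_{L^{p,\infty}}\le p\|g\|_{L^{p,\infty}}$, whereas you pull out the weak norm termwise and sum the geometric series $\sum_{k}2^{-k/p}\sim p/\log 2$ directly; both computations are equivalent and give the same linear growth in $p$.
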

\begin{proof}
Iterating \ref{eq:2.5.2} we have that 
\[
\begin{split}f^{*}(t) & \leq c\sum_{k=0}^{\infty}\left(M_{\delta}^{\sharp}f\right)^{*}(2^{k}\gamma t)+f^{*}(+\infty)\\
 & \leq c\frac{1}{\log2}\int_{t\frac{\gamma}{2}}^{\infty}\left(M_{\delta}^{\sharp}f\right)^{*}(s)\frac{ds}{s}
\end{split}
\]
using that $f^{*}(+\infty)=0$ which follows since $\left|\left\{ x\ :\ |f(x)|>t\right\} \right|<\infty$ for each $t>0$. Now we recall that 
\[
Sf(x)=\int_{x}^{\infty}f(s)\frac{1}{s}ds
\]
is the adjoint of Hardy operator. Then the preceding estimate can
be restated as follows
\begin{equation}
f^{*}(t)\leq c\frac{1}{\log2}S\left(\left(M_{\delta}^{\sharp}f\right)^{*}\right)\left(t\frac{\gamma}{2}\right).\label{eq:f*}
\end{equation}
Now we see that 
\begin{equation}
\left\Vert S\left(g^{*}\right)\right\Vert _{L^{p,\infty}}\leq p\left\Vert g\right\Vert _{L^{p,\infty}}.\label{eq:Sf*}
\end{equation}
Indeed, since $Sg^{*}(x)$ is decreasing,
\[
\left\Vert S\left(g^{*}\right)\right\Vert _{L^{p,\infty}}=\sup_{t>0}t^{\frac{1}{p}}S\left(g^{*}\right)^{*}(t)=\sup_{t>0}t^{\frac{1}{p}}S\left(g^{*}\right)(t),
\]
and the desired estimate follows from observing that for each $t>0$ we have that 
\[
\begin{split} t^{\frac{1}{p}}S\left(g^{*}\right)(t) & =t^{\frac{1}{p}}\int_{t}^{\infty}g^{*}(s)\frac{1}{s}\,ds=t^{\frac{1}{p}}\int_{t}^{\infty}s^{\frac{1}{p}}g^{*}(s)\frac{1}{s^{1+\frac{1}{p}}}ds\\
 & \leq\|g\|_{L^{p,\infty}}t^{\frac{1}{p}}\int_{t}^{\infty}\frac{1}{s^{1+\frac{1}{p}}}ds=\|g\|_{L^{p,\infty}}t^{\frac{1}{p}}\int_{t}^{\infty}\frac{1}{s^{1+\frac{1}{p}}}ds\\
 & =\|g\|_{L^{p,\infty}}t^{\frac{1}{p}}\left[\frac{1}{-\frac{1}{p}s^{\frac{1}{p}}}\right]_{s=t}^{\infty}=p\|g\|_{L^{p,\infty}}.
\end{split}
\]
Armed with  (\ref{eq:f*}) and (\ref{eq:Sf*}) we can now
establish the desired inequality:
\[
\begin{split}\|f\|_{L^{p,\infty}} & =\sup_{t>0}t^{\frac{1}{p}}f^{*}(t)\leq c\frac{1}{\log2}\sup_{t>0}t^{\frac{1}{p}}S\left(\left(M_{\delta}^{\sharp}f\right)^{*}\right)\left(t\frac{\gamma}{2}\right)\\
 & =c\frac{1}{\log2}\left(\frac{2}{\gamma}\right)^{\frac{1}{p}}\sup_{t>0}\left(t\frac{\gamma}{2}\right)^{\frac{1}{p}}S\left(\left(M_{\delta}^{\sharp}f\right)^{*}\right)\left(t\frac{\gamma}{2}\right)\\
 & =c\frac{1}{\log2}\left(\frac{2}{\gamma}\right)^{\frac{1}{p}}\left\Vert S\left(\left(M_{\delta}^{\sharp}f\right)^{*}\right)\right\Vert _{L^{p,\infty}}\\
 & \leq c\frac{1}{\log2}\left(\frac{2}{\gamma}\right)^{\frac{1}{p}}p\|M_{\delta}^{\sharp}f\|_{L^{p,\infty}}
\end{split}
\]
\end{proof}
\begin{lem}
\label{MMsharp}Let $1\leq p<\infty$ and $0<\varepsilon\leq1$. Suppose
that $f$ is a function such that for every $t>0$ $|\{x\ :\ |f(x)|>t\}|<\infty$.
Then there exists a constant $c=c_{n,\varepsilon}$ such that 
\[
\left\Vert M_{\varepsilon}^{d}f\right\Vert _{L^{p,\infty}}\leq cp\left\Vert M_{\varepsilon}^{\sharp,d}f\right\Vert _{L^{p,\infty}}
\]
\end{lem}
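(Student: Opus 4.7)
The plan is to follow the template of Lemma~\ref{fFsharp}, but applied to the function $g=M_\varepsilon^d f$ in place of $f$, and then to reduce the resulting estimate to the desired one by a pointwise comparison of the two sharp maximal functions involved. Before applying Lemma~\ref{fFsharp} to $M_\varepsilon^d f$, I would verify the hypothesis that $M_\varepsilon^d f$ has finite level sets: since $\{M_\varepsilon^d f>t\}=\{M^d(|f|^\varepsilon)>t^\varepsilon\}$, this follows from the weak-$(1,1)$ bound for $M^d$ applied to $|f|^\varepsilon$ together with the hypothesis that $|\{|f|>t\}|<\infty$ for every $t>0$.

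Fixing any $0<\delta<\varepsilon$ and applying Lemma~\ref{fFsharp} to $g=M_\varepsilon^d f$ would give
\[
\|M_\varepsilon^d f\|_{L^{p,\infty}}\le c\,p\,\|M_\delta^{\sharp,d}(M_\varepsilon^d f)\|_{L^{p,\infty}},
\]
so the conclusion reduces to proving the pointwise estimate
\[
M_\delta^{\sharp,d}(M_\varepsilon^d f)(x)\le c_{n,\varepsilon}\,M_\varepsilon^{\sharp,d}f(x).
\]
To establish this I would fix a dyadic cube $Q\ni x$ and use the dyadic identity
\[
M_\varepsilon^d f(y)=\max\{M_\varepsilon^d(f\chi_Q)(y),\,C_Q\},\qquad y\in Q,
\]
where $C_Q=\bigl(\sup_{Q'\supsetneq Q}\tfrac1{|Q'|}\int_{Q'\setminus Q}|f|^\varepsilon\bigr)^{1/\varepsilon}$ is constant on $Q$. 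This gives $M_\varepsilon^d f-C_Q\le M_\varepsilon^d(f\chi_Q)$ on $Q$. The $\varepsilon$-subadditivity $(a+b)^\varepsilon\le a^\varepsilon+b^\varepsilon$ applied to $f\chi_Q=(f-f_Q)\chi_Q+f_Q\chi_Q$ then yields $M_\varepsilon^d(f\chi_Q)\le c_\varepsilon\bigl(M_\varepsilon^d((f-f_Q)\chi_Q)+|f_Q|\bigr)$ on $Q$. Finally, Kolmogorov's inequality applied to $M^d$ acting on the $L^1$ function $|(f-f_Q)\chi_Q|^\varepsilon$ with exponent $\delta/\varepsilon<1$ produces
\[
\Bigl(\frac1{|Q|}\int_Q M_\varepsilon^d((f-f_Q)\chi_Q)^\delta\Bigr)^{1/\delta}\le c\,\langle f-f_Q\rangle_Q^{(\varepsilon)}\le c\,M_\varepsilon^{\sharp,d}f(x),
\]
where $\langle h\rangle_Q^{(\varepsilon)}=(\tfrac1{|Q|}\int_Q|h|^\varepsilon)^{1/\varepsilon}$.

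The main obstacle is the stray $|f_Q|$ that appears in the $\varepsilon$-subadditivity step: since $|f_Q|$ is in general not controlled by $M_\varepsilon^{\sharp,d}f(x)$, it has to be absorbed into the constant being subtracted in the sharp maximal function, replacing $C_Q$ by $C_Q+c_\varepsilon|f_Q|$. The delicacy is that for $\delta<1$ the usual triangle-inequality argument that allows one to replace the mean $(M_\varepsilon^d f)_Q$ in the definition of $M_\delta^{\sharp,d}$ by an arbitrary constant with only a factor-$2$ loss breaks down, and one must instead exploit the pointwise lower bound $M_\varepsilon^d f\ge C_Q$ on $Q$, furnished by the dyadic max-formula, in order to control the positive and negative parts of $M_\varepsilon^d f-(C_Q+c_\varepsilon|f_Q|)$ separately. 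Making this bookkeeping work while retaining the linear $p$-dependence coming from Lemma~\ref{fFsharp} is the technical heart of the argument.
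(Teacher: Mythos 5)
Your overall strategy is exactly the paper's: apply Lemma~\ref{fFsharp} to $M_{\varepsilon}^{d}f$ with an auxiliary exponent $0<\delta<\varepsilon$ (your verification that $M_{\varepsilon}^{d}f$ has finite distribution function, via the weak $(1,1)$ bound for $M^{d}$ applied to $|f|^{\varepsilon}$, is a detail the paper leaves implicit), thereby reducing everything to the pointwise inequality $M_{\delta}^{\sharp,d}\bigl(M_{\varepsilon}^{d}f\bigr)\leq c\,M_{\varepsilon}^{\sharp,d}f$. The paper does not prove this inequality; it quotes it from Ortiz-Caraballo \cite{O}. So the reduction is sound and the statement you still need is a known, citable result.

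The gap lies in your self-contained derivation of that pointwise bound, and it is essentially the one you flag yourself. The decomposition $f\chi_{Q}=(f-f_{Q})\chi_{Q}+f_{Q}\chi_{Q}$ injects the additive term $|f_{Q}|$, which is genuinely not dominated by $M_{\varepsilon}^{\sharp,d}f(x)$: for a slowly varying function such as $f(y)=\min\{1,|y|^{-\alpha}\}$ with $\alpha$ small and $Q$ a small cube near the origin, one has $|f_{Q}|\approx 1$ while $M_{\varepsilon}^{\sharp,d}f(x)=O(\alpha)$. Shifting the subtracted constant to $C_{Q}+c_{\varepsilon}|f_{Q}|$ does not remove it: on the set where $M_{\varepsilon}^{d}f<C_{Q}+c_{\varepsilon}|f_{Q}|$ the deviation is still as large as $c_{\varepsilon}|f_{Q}|$, so your bound for $\bigl(\tfrac{1}{|Q|}\int_{Q}|M_{\varepsilon}^{d}f-c|^{\delta}\bigr)^{1/\delta}$ retains an uncontrolled $|f_{Q}|$; and, as you correctly observe, for $\delta<1$ one cannot then return to the oscillation about $(M_{\varepsilon}^{d}f)_{Q}$ at the cost of an absolute constant. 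The proof in \cite{O} (see also \cite{HPAinfty}) avoids introducing $f_{Q}$ altogether: it decomposes at the level of the $\varepsilon$-th power, writing $|f|^{\varepsilon}=\bigl(|f|^{\varepsilon}-(|f|^{\varepsilon})_{Q}\bigr)\chi_{Q}+\cdots$, so that the subtracted constant is automatically a pointwise lower bound for $M^{d}(|f|^{\varepsilon})$ on $Q$ and no stray average ever appears additively; the oscillation is then measured for the $\delta$-th power (an $L^{1}$ oscillation, where changing the center costs a factor $2$), one passes between powers via $|a^{s}-b^{s}|\leq|a-b|^{s}$ for $0<s\leq1$, and only then applies Kolmogorov as you propose. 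Either run that version of the argument or simply cite the lemma from \cite{O}, as the paper does; as written, the step you defer as ``the technical heart'' is a real gap, not mere bookkeeping.
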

\begin{proof}

We apply the preceding lemma with $f$ replaced by $M_{\varepsilon}f$ and $\delta=\varepsilon_{0}$
such that $0<\varepsilon_{0}<\varepsilon<1$. Then 
\[
\|M_{\varepsilon}f\|_{L^{p,\infty}}\leq cp\left\Vert M_{\varepsilon_{0}}^{\sharp,d}\left(M_{\varepsilon}f\right)\right\Vert _{L^{p,\infty}}.
\]
We also know (see \cite{O}) that 
\[
M_{\varepsilon_{0}}^{\sharp,d}\left(M_{\varepsilon}f\right)\leq cM_{\varepsilon}^{\sharp,d}f.
\]
Consequently, 
\[
\left\Vert M_{\varepsilon_{0}}^{\sharp,d}\left(M_{\varepsilon}f\right)\right\Vert _{L^{p,\infty}}\leq c\left\Vert M_{\varepsilon}^{\sharp,d}f\right\Vert _{L^{p,\infty}}.
\]
This concludes the proof of the lemma. 
\end{proof}
\subsection{Proof of Lemma \ref{LemUnweighted}}
Armed with the preceding results we are in the position to establish
Lemma \ref{LemUnweighted}. We consider different cases.

\subsection*{Calderón-Zygmund operators}

Firstly we obtain the upper bound. Using lemma \ref{fFsharp}
\[
\|Tf\|_{L^{p,\infty}}\leq cp\left\Vert M_{\delta}^{\sharp}\left(Tf\right)\right\Vert _{L^{p,\infty}}\leq   cp\|Mf\|_{L^{p,\infty}}\leq  cp\|f\|_{L^{p}},
\]
since $M_{\delta}^{\sharp}\left(Tf\right) \leq c_{\delta} Mf$, $0<\delta<1$ as can be found \cite{AP}.
Now we deal with the lower bound. It is well known that
\[
H\left(\chi_{[0,1]}\right)(x)= \log\left(\frac{|x|}{|x-1|}\right).
\]
Then 
\[
\begin{split}\|H\chi_{(0,1)}\|_{L^{p,\infty}} & =\sup_{t>0}t\left|\left\{ x\in\left(0,\frac{1}{2}\right)\ :\ -\frac{1}{\pi}\log\left(\frac{x}{x-1}\right)>t\right\} \right|^{\frac{1}{p}}\\
 & =\sup_{t>0}t\left|\left\{ x\in\left(0,\frac{1}{2}\right)\ :\ \Phi(x)>t\right\} \right|^{\frac{1}{p}}\\
{\scriptstyle \Phi\text{ decreasing}} & =\sup_{t>0}t\left|\left\{ x\in\left(0,\frac{1}{2}\right)\ :\ \Phi^{-1}(t)>x\right\} \right|^{\frac{1}{p}}=\sup_{t>0}t\Phi^{-1}(t)^{\frac{1}{p}}
\end{split}
\]
Using again the properties of $\Phi$ 
\[
\sup_{t>0}t\Phi^{-1}(t)^{\frac{1}{p}}=\sup_{0<x<\frac{1}{2}}\Phi(x)x^{\frac{1}{p}}
\]
Now we observe that for every $0<x<\frac{1}{2}$, 
\[
\Phi(x)x^{\frac{1}{p}}\geq-\frac{1}{\pi}\log\left(\frac{x}{2}\right)x^{\frac{1}{p}}=-p2^{\frac{1}{p}}\frac{1}{\pi}\log\left(\left(\frac{x}{2}\right)^{\frac{1}{p}}\right)\left(\frac{x}{2}\right)^{\frac{1}{p}}\geq cp
\]
and we're done.

\subsection*{Commutators}
Firstly we obtain the upper bound. Suppose that $\|b\|_{BMO}=1$.
Then using lemma \ref{fFsharp}
\[
\begin{split}\|[b,T]f\|_{L^{p,\infty}} & \leq cp\left\Vert M_{\delta}^{\sharp}\left([b,T]f\right)\right\Vert _{L^{p,\infty}}\leq cp\left\Vert M_{\varepsilon}\left(Tf\right)\right\Vert _{L^{p,\infty}}+cp\left\Vert M^{2}f\right\Vert _{L^{p,\infty}}\\
 & =cp\left(L_{1}+L_{2}\right)
\end{split}
\]
Now we observe that using Lemma \ref{MMsharp} with $0<\varepsilon<1$,
\[
L_{1}=\left\Vert M_{\varepsilon}\left(Tf\right)\right\Vert _{L^{p,\infty}}\leq cp\left\Vert M_{\varepsilon}^{\sharp}(Tf)\right\Vert _{L^{p,\infty}}\leq cp\|Mf\|_{L^{p,\infty}}\leq cp\|f\|_{L^{p}}.
\]
For $L_{2}$ we have
\[
L_{2}=\left\Vert M^{2}f\right\Vert _{L^{p,\infty}}\leq c\left\Vert Mf\right\Vert _{L^{p}}\leq cp'\|f\|_{L^{p}}.
\]
Consequently
\[
\|[b,T]f\|_{L^{p,\infty}}\leq cp^{2}p'\|f\|_{L^{p}}.
\]

Let us focus on the lower bound. Consider the Hilbert transform 
\[
Hf(x)=pv\,\int_{\mathbb{R}}\frac{f(y)}{x-y}\,dy,
\]
and consider the BMO function $b(x)=\log|x|$. Let $f=\chi_{(0,1)}$.
If $0<x<1$, 
\[
[b,H]f(x)=\int_{0}^{1}\frac{\log(x)-\log(y)}{x-y}\,dy=\int_{0}^{1}\frac{\log(\frac{x}{y})}{x-y}\,dy=\int_{0}^{1/x}\frac{\log(\frac{1}{t})}{1-t}\,dt
\]
\[
\int_{0}^{1/x}\frac{\log(\frac{1}{t})}{1-t}\,dt=\int_{0}^{1}\frac{\log(\frac{1}{t})}{1-t}\,dt+\int_{1}^{1/x}\frac{\log(\frac{1}{t})}{1-t}\,dt
\]
$\frac{\log(\frac{1}{t})}{1-t}$ is positive for $(0,1)\cup(1,\infty)$
we have for $0<x<1$ 
\[
|[b,H]f(x)|>\int_{1}^{1/x}\frac{\log(\frac{1}{t})}{1-t}\,dt.
\]
But since 
\[
\int_{1}^{\infty}\frac{\log t}{t-1}\,dt=\infty
\]
and 
\[
\lim_{L\to\infty}\frac{\int_{1}^{L}\frac{\log(\frac{1}{t})}{1-t}\,dt}{(\log L)^{2}}=1
\]
we have that for some $x_{0}<1$ 
\[
|[b,H]f(x)|>c\,(\log\frac{1}{x})^{2}\qquad0<x<x_{0}.
\]
 and then 
\[
\begin{split}t|\{x\in\mathbb{R}:|[b,H]f(x)|>t\}|^{\frac{1}{p}} & \geq t\left|\left\{ x\in(0,x_{0}):\,c\left(\log\frac{1}{x}\right)^{2}>t\right\} \right|^{\frac{1}{p}}\\
 & \geq cte^{\frac{-\sqrt{t}}{p}}=cp^{2}\frac{t}{p^{2}}e^{-\sqrt{\frac{t}{p^{2}}}}.
\end{split}
\]
We have that
\[
\|\left[b,H\right]f\|_{L^{p,\infty}(\mathbb{R})}=\sup_{t>0}t|\{x\in\mathbb{R}:|[b,H]f(x)|>t\}|^{\frac{1}{p}}\geq\sup_{t>0}cp^{2}\frac{t}{p^{2}}e^{-\sqrt{\frac{t}{p^{2}}}}\geq cp^{2}.
\]
Let $b=\log|x|$ and $f(x)=\chi_{(0,1)}(x)$. Now if $x>e$, we have
that 
\[
\begin{split}\left|\left[b,H\right]f(x)\right| & =\int_{0}^{1}\frac{\log\left(x\right)-\log\left(y\right)}{x-y}dy=\int_{0}^{\frac{1}{x}}\frac{\log\left(\frac{1}{t}\right)}{1-t}tdt\\
 & \geq\log(x)\int_{0}^{\frac{1}{x}}\frac{1}{1-t}dt=\frac{\log(x)}{x}.
\end{split}
\]
Now we observe that 
\[
\begin{split}\|\left[b,H\right]f\|_{L^{p,\infty}(\mathbb{R})} & =\sup_{t>0}t\left|\left\{ x\in\mathbb{R}\,:\,\left|\left[b,H\right]f(x)\right|>t\right\} \right|^{\frac{1}{p}}\geq\sup_{t>0}t\left|\left\{ x>e\,:\,p^{2}\frac{\log(x)}{x}>t\right\} \right|^{\frac{1}{p}}\\
 & =\sup_{t>0}t\left|\left\{ x>e\,:\,\Phi(x)>t\right\} \right|^{\frac{1}{p}}
\end{split}
\]
$\Phi$ is decreasing so 
\[
\begin{split} & \sup_{t>0}t\left|\left\{ x>e\,:\,\Phi(x)>t\right\} \right|^{\frac{1}{p}}=\sup_{t>0}t\left|\left\{ x>e\,:\,\Phi^{-1}(t)>x\right\} \right|^{\frac{1}{p}}\\
 & =\sup_{t>0}t\left(\Phi^{-1}(t)-e\right)^{\frac{1}{p}}=\sup_{x>e}\Phi(x)\left(x-e\right)^{\frac{1}{p}}\geq\sup_{x>2e}\Phi(x)\left(x-e\right)^{\frac{1}{p}}
\end{split}
\]
Now we observe that
\[
\begin{split}\Phi(x)\left(x-e\right)^{\frac{1}{p}} & \geq-\log\left(\frac{x-1}{x}\right)\log(x)\left(x-\frac{x}{2}\right)^{\frac{1}{p}}=\frac{\log(x)}{x^{\frac{1}{p'}}}\left(\frac{1}{2}\right)^{\frac{1}{p}}\\
 & =p'\frac{\log\left(x^{\frac{1}{p'}}\right)}{x^{\frac{1}{p'}}}\left(\frac{1}{2}\right)^{\frac{1}{p}}\geq\frac{1}{2}p'\frac{\log\left(x^{\frac{1}{p'}}\right)}{x^{\frac{1}{p'}}}
\end{split}
\]
and consequently 
\[
\|\left[b,H\right]f\|_{L^{p,\infty}(\mathbb{R})}\geq\frac{1}{2}p'
\]

\subsection*{Maximal function}

For the upper bound
\[
\|M^{k}f\|_{L^{p,\infty}}\leq c_{n}\|M^{k-1}f\|_{L^{p}}\leq c_{n}(p')^{k-1}\|f\|_{L^{p}}.
\]
For the lower bound firstly we observe that $M^{k}\chi_{(0,1)}\simeq\frac{(\log|x|)^{k-1}}{|x|}$
\[
\begin{split}\|M^{k}\chi_{(0,1)}\|_{L^{p,\infty}} & \simeq\sup_{t>0}t\left|\left\{ x\in\mathbb{R}\,:\,\frac{(\log|x|)^{k-1}}{|x|}>t\right\} \right|^{\frac{1}{p}}\geq\sup_{t>0}t\left|\left\{ x>e\,:\,\frac{(\log x)^{k-1}}{x}>t\right\} \right|^{\frac{1}{p}}\\
 & =\sup_{t>0}t\left|\left\{ x>e\,:\,\Psi(x)>t\right\} \right|^{\frac{1}{p}}=\sup_{t>0}t\left|\left\{ x>e\,:\,x<\Psi^{-1}(t)\right\} \right|^{\frac{1}{p}}\\
 & =\sup_{t>0}t\left(\Psi^{-1}(t)-e\right)^{\frac{1}{p}}=\sup_{x>e}\frac{(\log x)^{k-1}}{x}(x-e)^{\frac{1}{p}}\geq\sup_{x>2e}\frac{(\log x)^{k-1}}{x}(x-e)^{\frac{1}{p}}
\end{split}
\]
Now we see that
\[
\begin{split}\frac{(\log x)^{k-1}}{x}(x-e)^{\frac{1}{p}} & \geq\frac{(\log x)^{k-1}}{x}(x-\frac{x}{2})^{\frac{1}{p}}\geq\left(\frac{1}{2}\right)^{\frac{1}{p}}\frac{(\log x)^{k-1}}{x^{\frac{1}{p'}}}\\
 & =\left(\frac{1}{2}\right)^{\frac{1}{p}}\left(p'\right)^{k-1}\frac{\log(x^{\frac{1}{p'}})^{k-1}}{x^{\frac{1}{p'}}}\geq c\left(\frac{1}{2}\right)^{\frac{1}{p}}\left(p'\right)^{k-1}
\end{split}
\]
This gives the desired estimate.

\section{Proof of Theorem \ref{Thm:ApBounds}\label{sec:ThmApBounds}}
We consider each case separatedly. 
\subsection*{Calderón-Zygmund operators}

It was established in \cite{HLMORSUT} that
\[
\|T\|_{L^{p}(w)\rightarrow L^{p,\infty}(w)}\leq c_{n}[w]_{A_{p}}
\]
and the optimality of the exponent is a direct corollary of the combination
of Theorem \ref{Thm:LowerBoundWE} and Lemma \ref{LemUnweighted}.

\subsection*{Commutators}

The lower bound of the exponent is again a direct corollary of the
combination of Theorem \ref{Thm:LowerBoundWE} and Lemma \ref{LemUnweighted}.
For the upper exponent we are going to use a proof based on 
a sparse domination result obtained in \cite{LORR}.

We recall that a family of dyadic cubes $\mathcal{S}$ is $\eta$-sparse
with $\eta\in(0,1)$ if for each cube $Q\in\mathcal{S}$ there exists
a measurable subset $E_{Q}\subset Q$ such that $E_{Q}$ are pairwise
disjoint and $\eta|Q|\leq|E_{Q}|$.

The following result is well known
\begin{thm}
\label{Thm:HyLi}Let $1<p<\infty$. Then if $w\in A_{p}$
\[\begin{split}
\|\mathcal{A}_{\mathcal{S}}\|_{L^{p}(w)\rightarrow L^{p}(w)}&\le c_{n,p}[w]_{A_{p}}^{\max\left\{1,\frac{1}{p-1}\right\} },\\
\|\mathcal{A}_{\mathcal{S}}\|_{L^{p}(w)\rightarrow L^{p,\infty}(w)}&\le c_{n,p}[w]_{A_{p}}\end{split}
\]
where $\mathcal{A}_{\mathcal{S}}(f)=\sum_{Q\in\mathcal{S}}\frac{1}{|Q|}\int_{Q}f\chi_{Q}(x)$.
\end{thm}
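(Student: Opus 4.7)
The theorem asserts a strong-type $A_p$ bound and a weak-type $A_p$ bound for the sparse operator $\mathcal{A}_{\mathcal{S}}$, and I would handle them separately. The common ingredient in both arguments is the defining feature of sparsity: the pairwise disjoint sets $E_Q\subset Q$ with $|E_Q|\ge \eta|Q|$, which let one convert a sum over overlapping cubes into an integral against a single weighted dyadic maximal function.

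For the strong-type bound, my plan is to dualize with respect to the conjugate weight $\sigma:=w^{1-p'}$, noting that $[\sigma]_{A_{p'}}=[w]_{A_p}^{p'-1}$. Substituting $f\mapsto f\sigma$ and pairing against $g\in L^{p'}(w)$ reduces the estimate to a bilinear form
\[
\sum_{Q\in\mathcal{S}}\alpha_Q\,\Bigl(\tfrac{1}{\sigma(Q)}\textstyle\int_Q f\sigma\Bigr)\Bigl(\tfrac{1}{w(Q)}\int_Q gw\Bigr),
\]
with coefficients $\alpha_Q=\sigma(Q)w(Q)/|Q|$, which the $A_p$ condition controls by $[w]_{A_p}^{1/p}\sigma(Q)^{1/p}w(Q)^{1/p'}$. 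Using sparsity to replace $\sum_Q$ by integrals over the pairwise disjoint $E_Q$ and then H\"older's inequality, the sum is majorized by a product of weighted dyadic maximal functions of $f$ and $g$ on their natural spaces. Since these are bounded on $L^p(\sigma)$ and $L^{p'}(w)$ respectively with weight-independent constants, one arrives at the exponent $1$ when $p=2$; the symmetry between the roles of $w$ and $\sigma$ (equivalently, of $p$ and $p'$) together with duality then upgrades the general exponent to $\max\{1,\tfrac{1}{p-1}\}$.

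For the weak-type bound, my plan is a Calder\'on--Zygmund-type stopping argument internal to the sparse family. For $\lambda>0$, choose $\{Q_j\}\subset\mathcal{S}$ to be the maximal cubes on which $\tfrac{1}{|Q_j|}\int_{Q_j}|f|>\lambda/c$. The sparse structure permits a telescoping estimate showing that $\mathcal{A}_{\mathcal{S}}f\le C\lambda$ pointwise off $\bigcup Q_j$, so the level set $\{\mathcal{A}_{\mathcal{S}}f>2\lambda\}$ is contained in $\bigcup Q_j$. Applying the $A_p$ inequality to each $Q_j$ one obtains
\[
w\Bigl(\bigcup_j Q_j\Bigr)\le \sum_j w(Q_j)\le c\,[w]_{A_p}\,\lambda^{-p}\|f\|_{L^p(w)}^p,
\]
using the maximality of the $Q_j$ and disjointness of their principal parts.

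The main obstacle I expect is securing the sharp linear exponent $[w]_{A_p}^{1}$ in the weak-type bound when $1<p<2$: extracting it from the strong-type estimate via Kolmogorov's inequality would yield only $[w]_{A_p}^{1/(p-1)}$, which is strictly worse. The stopping-time argument above circumvents this precisely because it works directly from the sparse representation and invokes $A_p$ only as a testing condition on individual cubes, never passing through the $L^p$ boundedness of $\mathcal{A}_{\mathcal{S}}$ itself.
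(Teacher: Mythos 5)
The paper does not actually prove Theorem \ref{Thm:HyLi}: it is quoted as ``well known'' and imported from the literature (the strong-type bound with exponent $\max\{1,\frac{1}{p-1}\}$ goes back to Cruz-Uribe--Martell--P\'erez/Moen/Lacey et al., and the weak-type linear bound to the Hyt\"onen--Li/HLMORSUT circle of ideas). So there is no in-paper argument to compare against, and your proposal has to stand on its own. Your strong-type sketch follows the standard dualization $f\mapsto f\sigma$, $\sigma=w^{1-p'}$, and is essentially right at $p=2$, where $\sigma w=1$ and sparsity plus Cauchy--Schwarz against $M^d_\sigma f$ and $M^d_w g$ closes the argument. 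But the step ``Hölder, then sparsity'' for general $p$ is glossed over: the naive Hölder leaves you with $\sum_{Q}\sigma(Q)\bigl(\langle f\rangle_Q^\sigma\bigr)^p$, whose control requires the Carleson condition $\sum_{Q\subset R}\sigma(Q)\lesssim\sigma(R)$, which sparsity gives only with a constant involving $[\sigma]_{A_\infty}$; and duality/symmetry only converts the case $p\ge 2$ into $p\le 2$, so you still owe a genuine proof of the linear bound for all $p>2$, not just $p=2$. This part is incomplete rather than wrong.

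The weak-type argument, however, has a fatal gap: the containment $\{\mathcal{A}_{\mathcal{S}}f>2\lambda\}\subset\bigcup_j Q_j$ is false. The operator $\mathcal{A}_{\mathcal{S}}$ \emph{sums} the averages over all sparse cubes containing $x$, it does not take their supremum, so knowing that every sparse cube through $x$ has average at most $\lambda/c$ does not bound $\mathcal{A}_{\mathcal{S}}f(x)$; there is no telescoping, because the averages along a decreasing chain of sparse cubes need not decay at all. Concretely, take $\mathcal{S}=\{[0,2^{-k}]\}_{k\ge 0}$, which is $\tfrac12$-sparse with $E_{[0,2^{-k}]}=[2^{-k-1},2^{-k}]$, and $f=\chi_{[0,1]}$. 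Every average $\langle f\rangle_Q$ equals $1$, so for $\lambda>c$ your stopping family is empty, yet $\mathcal{A}_{\mathcal{S}}f(x)=\lfloor\log_2(1/x)\rfloor+1$ is unbounded near the origin and the level set $\{\mathcal{A}_{\mathcal{S}}f>2\lambda\}$ is nonempty for every $\lambda$. Your diagnosis of the difficulty is correct (the trivial passage from the strong bound gives only $[w]_{A_p}^{1/(p-1)}$ for $p<2$), but the proposed fix does not repair it. A working route to the linear weak bound for $1<p<2$ is either the mixed estimate $\|\mathcal{A}_{\mathcal{S}}\|_{L^p(w)\to L^{p,\infty}(w)}\lesssim [w]_{A_p}^{1/p}[w]_{A_\infty}^{1/p'}\le[w]_{A_p}$, or the $L^{p',1}(w)$-duality pairing that the paper itself uses immediately after this theorem, combined with the boundedness of the weighted maximal operator on the Lorentz space.
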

In \cite{LORR} it was proved that that commutators can be controlled by suitable sparse operators. 
The precise statement is the following.
\begin{thm} Let $T$ a Calder\'on-Zygmund operator, $b\in L^1_\text{loc}(\mathbb{R}^n)$ and $f\in\mathcal{C}_c^\infty$. There exist $3^{n}$ dyadic lattices $\mathcal{D}_{j}$
and $3^{n}$ sparse families $\mathcal{S}_{j}\subset\mathcal{D}_{j}$
such that 
\[
|[b,T]f(x)|\leq c_{n}c_{T}\sum_{j=1}^{3^{n}}\left(\mathcal{T}_{b,\mathcal{S}_{j}}f(x)+\mathcal{T}_{b,\mathcal{S}_{j}}^{*}f(x)\right)
\]
where $c_{T}=c_{K}+c_\delta+\|T\|_{L^{2}\rightarrow L^{2}}$
and 
\[
\begin{split}\mathcal{T}_{b,\mathcal{S}}f(x) & =\sum_{Q\in\mathcal{S}}|b(x)-b_{Q}|\frac{1}{|Q|}\int_{Q}|f|\chi_{Q}(x),\\
\mathcal{T}_{b,\mathcal{S}}^{*}f(x) & =\sum_{Q\in\mathcal{S}}\frac{1}{|Q|}\int_{Q}|b-b_{Q}||f|\chi_{Q}(x).
\end{split}
\]
\end{thm}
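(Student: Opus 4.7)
My plan is to establish the pointwise sparse domination by a recursive stopping-time argument tailored to the two bilinear averages $|b(x)-b_{3Q}|\langle|f|\rangle_{3Q}$ and $\langle|b-b_{3Q}||f|\rangle_{3Q}$ appearing in $\mathcal{T}_{b,\mathcal{S}}$ and $\mathcal{T}_{b,\mathcal{S}}^{*}$, where I write $\langle g\rangle_{Q}=\tfrac{1}{|Q|}\int_{Q}g$. First, by the Lerner--Nazarov three lattices theorem, every cube in $\mathbb{R}^{n}$ lies inside a cube of comparable side belonging to one of $3^{n}$ shifted standard dyadic lattices $\mathcal{D}_{j}$; this reduces the problem to building a single sparse family inside one fixed dyadic lattice $\mathcal{D}$ with averages on $3Q$, and later converting $3Q$ to a genuine dyadic cube of some $\mathcal{D}_{j}$, producing the factor $\sum_{j=1}^{3^{n}}$. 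I fix a large $Q_{0}\in\mathcal{D}$ with $Q_{0}\supset\supp f$ and grow $\mathcal{S}$ iteratively starting from $\{Q_{0}\}$.

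The recursive rule is as follows. Given $Q\in\mathcal{S}$, I define
\[
\Omega(Q)=\bigl\{M(f\chi_{3Q})>C\langle|f|\rangle_{3Q}\bigr\}\cup\bigl\{M((b-b_{3Q})f\chi_{3Q})>C\langle|b-b_{3Q}||f|\rangle_{3Q}\bigr\}\cup G(Q),
\]
where $G(Q)$ is the set of $x\in Q$ with $|[b,T](f\chi_{3Q})(x)-c_{Q}|>C(|b(x)-b_{3Q}|\langle|f|\rangle_{3Q}+\langle|b-b_{3Q}||f|\rangle_{3Q})$ for a suitable local median $c_{Q}$ of $[b,T](f\chi_{3Q})$ on $Q$. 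The aim is to arrange $|\Omega(Q)|\le 2^{-n-2}|Q|$ by choosing $C\sim c_{n}c_{T}$. The first two pieces of $\Omega(Q)$ are immediate from the weak $(1,1)$ bound for $M$. For $G(Q)$ I would use the pointwise splitting
\[
[b,T](f\chi_{3Q})(x)=(b(x)-b_{3Q})\,T(f\chi_{3Q})(x)-T((b-b_{3Q})f\chi_{3Q})(x).
\]
The pointwise factor $|b(x)-b_{3Q}|$ of the first summand is placed directly on the right-hand side, reducing the estimate to controlling $|\{x\in Q:|T(f\chi_{3Q})(x)-m_{1}|>C\langle|f|\rangle_{3Q}\}|$ via the weak $(1,1)$ inequality for $T$; the second summand is dominated by weak $(1,1)$ for $T$ applied to $(b-b_{3Q})f\chi_{3Q}$ on the scale $C\langle|b-b_{3Q}||f|\rangle_{3Q}$. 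Choosing $c_{Q}=m_{1}+m_{2}$ with $m_{2}$ the analogous median for the second summand and $C$ sufficiently large gives the desired measure bound.

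The main obstacle is precisely this measure estimate for $G(Q)$: the commutator $[b,T]$ fails the weak $(1,1)$ inequality, so one cannot stop on it directly, and the essential trick is that the splitting above isolates the pointwise factor $|b(x)-b_{3Q}|$, which is then forced onto the right-hand side as part of a bilinear form rather than being absorbed into an $L^{1}$ integral of $b$ (that would require unavailable $L\log L$-type estimates). This structural asymmetry is also the reason the dominating family contains both $\mathcal{T}_{b,\mathcal{S}}$ and $\mathcal{T}_{b,\mathcal{S}}^{*}$ rather than a single operator.

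Once the measure bound is in hand, I take the maximal dyadic subcubes $\{P_{j}^{Q}\}\subset\mathcal{D}$ of $Q$ with $|P_{j}^{Q}\cap\Omega(Q)|>2^{-n-1}|P_{j}^{Q}|$; then $\sum_{j}|P_{j}^{Q}|\le\tfrac{1}{2}|Q|$, giving $\tfrac{1}{2}$-sparseness with good sets $E_{Q}=Q\setminus\bigcup_{j}P_{j}^{Q}$. On $E_{Q}$ the construction of $\Omega(Q)$ yields the pointwise bound built into the definition of $G(Q)$, while on each $P_{j}^{Q}$ the ``interior'' piece $[b,T](f\chi_{3P_{j}^{Q}})$ is left for the next generation. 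Adding the $P_{j}^{Q}$ to $\mathcal{S}$ and iterating, the medians $c_{Q}$ telescope across generations to produce
\[
|[b,T]f(x)|\le c_{n}c_{T}\sum_{Q\in\mathcal{S}}\bigl(|b(x)-b_{3Q}|\langle|f|\rangle_{3Q}+\langle|b-b_{3Q}||f|\rangle_{3Q}\bigr)\chi_{Q}(x).
\]
A final application of the three lattices theorem transfers the $3Q$-averages to averages over cubes in the $3^{n}$ shifted lattices $\mathcal{D}_{j}$, producing exactly $\sum_{j=1}^{3^{n}}(\mathcal{T}_{b,\mathcal{S}_{j}}f+\mathcal{T}_{b,\mathcal{S}_{j}}^{*}f)$.
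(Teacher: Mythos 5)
Your overall architecture --- localize to a dyadic cube, run a stopping-time recursion, and at each generation use the splitting $[b,T](f\chi_{3Q})=(b-b_{3Q})T(f\chi_{3Q})-T((b-b_{3Q})f\chi_{3Q})$ to isolate the pointwise factor $|b(x)-b_{3Q}|$ --- is exactly the mechanism of the proof in \cite{LORR}, which is where the paper takes this theorem from, and you correctly identify why the splitting is unavoidable (no weak $(1,1)$ bound for $[b,T]$). However, the sketch has a genuine gap at the recursion step. Your stopping set $\Omega(Q)$ only records where $M(f\chi_{3Q})$, $M((b-b_{3Q})f\chi_{3Q})$ and the local oscillation of $[b,T](f\chi_{3Q})$ are large. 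On a stopping cube $P_{j}$ you must write $[b,T](f\chi_{3Q})=[b,T](f\chi_{3P_{j}})+[b,T](f\chi_{3Q\setminus 3P_{j}})$ and absorb the second, ``outer'' term into $c_{n}c_{T}\left(|b(x)-b_{3Q}|\langle|f|\rangle_{3Q}+\langle|b-b_{3Q}||f|\rangle_{3Q}\right)$ for $x\in P_{j}$; nothing in your stopping conditions gives pointwise control of $T(f\chi_{3Q\setminus 3P_{j}})(x)$ or of $T((b-b_{3Q})f\chi_{3Q\setminus 3P_{j}})(x)$ on $P_{j}$, since the weak-type information you stopped on lives on $Q$ and does not localize to the smaller cube. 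This is precisely why \cite{LORR} adds to the stopping conditions a grand maximal truncated operator, $\mathcal{M}_{T,Q}f(x)=\sup_{Q'\ni x}\,\mathrm{ess\,sup}_{\xi\in Q'}\,|T(f\chi_{3Q\setminus 3Q'})(\xi)|$: its weak $(1,1)$ bound together with the maximality of $P_{j}$ (so that a positive fraction of the dyadic parent of $P_{j}$ lies outside $\Omega(Q)$) converts the stopping information into an $L^{\infty}(P_{j})$ bound for the outer terms. Without this device, or an equivalent one such as summing kernel tails over annuli, the recursion does not close.

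A second, smaller inconsistency concerns the medians. Subtracting a median $m_{1}$ of $T(f\chi_{3Q})$ changes the first summand by $(b(x)-b_{3Q})m_{1}$, which is not a constant, so $c_{Q}=m_{1}+m_{2}$ cannot serve both summands simultaneously; moreover your $c_{Q}$ are medians of the \emph{different} localized functions $[b,T](f\chi_{3Q})$ at different generations, so they have no reason to telescope. In fact no medians are needed at all: applying the weak $(1,1)$ bounds at heights $C\langle|f|\rangle_{3Q}$ and $C\langle|b-b_{3Q}||f|\rangle_{3Q}$ already gives $|G(Q)|\leq 2^{-n-2}|Q|$ with $c_{Q}=0$, and since each generation restarts with the freshly localized function $f\chi_{3P_{j}}$, the pointwise bounds on the good sets simply add up over the sparse family with no telescoping required.
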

Consequently it suffices to establish the weak-type $(p,p)$ for those sparse
operators. Without loss of generality we may assume that $\|b\|_{BMO}=1$. 

We deal first with $\mathcal{T}_{b,\mathcal{S}}f$. We observe that
\[
\|\mathcal{T}_{b,\mathcal{S}}f\|_{L^{p,\infty}(w)}=\sup_{\|g\|_{L^{p',1}=1}}\left|\int\mathcal{T}_{b,\mathcal{S}}(f)gw\right|
\]
Now 
\[
\left|\int\mathcal{T}_{b,\mathcal{S}}(f)gw\right|\leq\int\mathcal{T}_{b,\mathcal{S}}(f)|g|w=\sum_{Q\in\mathcal{S}}\frac{1}{|Q|}\int_{Q}|f|\int_{Q}|b-b_{Q}||g|w
\]
Arguing as in the proof of \cite[Theorem 1.4]{LORR} we have that there exists a sparse
family $\tilde{\mathcal{S}}\supset\mathcal{S}$ such that for every
cube in $Q\in\tilde{\mathcal{S}}$, 
\[
\int_{Q}|b-b_{Q}||g|w\leq c_{n}\|b\|_{BMO}\int_{Q}\mathcal{A}_{\tilde{\mathcal{S}}}(|g|w).
\]
Then, taking also into account that $\mathcal{A}_{\tilde{\mathcal{S}}}$
is self-adjoint, 
\[
\begin{split}\sum_{Q\in\mathcal{S}}\frac{1}{|Q|}\int_{Q}|f|\int_{Q}|b-b_{Q}||g|w & \leq c_{n}\|b\|_{BMO}\sum_{Q\in\mathcal{\tilde{S}}}\frac{1}{|Q|}\int_{Q}|f|\int_{Q}\mathcal{A}_{\tilde{\mathcal{S}}}(|g|w)\\
&=c_n\|b\|_{BMO}\int\mathcal{A}_{\tilde{\mathcal{S}}}(f)\mathcal{A}_{\tilde{\mathcal{S}}}\left(|g|w\right)\\
 & =c_n\|b\|_{BMO}\int\left(\mathcal{A}_{\tilde{\mathcal{S}}}\circ\mathcal{A}_{\tilde{\mathcal{S}}}\right)(f)|g|w
\end{split}
\]
and we have that 
\[
\begin{split}\|\mathcal{T}_{b,\mathcal{S}}f\|_{L^{p,\infty}(w)} & =\sup_{\|g\|_{L^{p',1}=1}}\left|\int\mathcal{T}_{b,\mathcal{S}}(f)gw\right|\leq\sup_{\|g\|_{L^{p',1}=1}}\int\left(\mathcal{A}_{\tilde{\mathcal{S}}}\circ\mathcal{A}_{\tilde{\mathcal{S}}}\right)(f)|g|w\\
 & \leq c_{n}\sup_{\|g\|_{L^{p',1}=1}}\int(\mathcal{A}_{\tilde{\mathcal{S}}}\circ\mathcal{A}_{\tilde{\mathcal{S}}})(f)|g|w\leq c_{n}\|(\mathcal{A}_{\tilde{\mathcal{S}}}\circ\mathcal{A}_{\tilde{\mathcal{S}}})f\|_{L^{p,\infty}(w)}\|g\|_{L^{p',1}(w)}.
\end{split}
\]
The  desired inequality follows from applying twice Theorem \ref{Thm:HyLi}.

For $\mathcal{T}^*_{b,\mathcal{S}}$ we can argue analogously. 
Indeed, it is clear that we also have that
\[
\int_{Q}|b-b_{Q}||f|w\leq c_{n}\|b\|_{BMO}\int_{Q}\mathcal{A}_{\tilde{\mathcal{S}}}(|f|w)
\]
and this yields
\[\mathcal{T}^*_{b,\mathcal{S}}(f)\leq c_n\|b\|_{BMO}\left(\mathcal{A}_{\tilde{\mathcal{S}}}\circ \mathcal{A}_{\tilde{\mathcal{S}}}\right)(|f|).\] 
Now it suffices to use Theorem \ref{Thm:HyLi} to end the proof.

\subsection*{Maximal operator}
The lower bound of the exponent is a straightforward consequence of
Theorem \ref{Thm:LowerBoundWE} and Lemma \ref{LemUnweighted}. For the
upper bound it readily follows from (\ref{estMax}) that
\[
\|M^{k}f\|_{L^{p,\infty}(w)}\leq c_{n}[w]_{A_{p}}^{\frac{1}{p}}\|M^{k-1}f\|_{L^{p}(w)}\leq c_{n}[w]_{A_{p}}^{\frac{1}{p}+\frac{k-1}{p-1}}\|f\|_{L^{p}(w)}
\]
and we are done.

\end{document}